\renewcommand*{\bar}{\overline}
\newcommand{\gfp}[1]{\Gamma_p{\left({#1}\right)}}
\newcommand{\biggfp}[1]{\Gamma_p{\bigl({#1}\bigr)}}
\theoremstyle{plain}
\newtheorem{theorem}{Theorem}[section]
\newtheorem{lemma}[theorem]{Lemma}
\newtheorem{prop}[theorem]{Proposition}
\newtheorem{cor}[theorem]{Corollary}
\theoremstyle{definition}
\newtheorem{defi}[theorem]{Definition}
\newtheorem{remark}[theorem]{Remark}
\numberwithin{equation}{section}
\begin{document}

\title[Trace of Frobenius and the $p$-adic gamma function]{The trace of Frobenius of elliptic curves\\ and the $p$-adic gamma function}
\author{Dermot M\lowercase{c}Carthy}  

\address{Department of Mathematics, Texas A\&M University, College Station, TX 77843-3368, USA}

\email{mccarthy@math.tamu.edu}


\subjclass[2010]{Primary: 11G20, 33E50; Secondary: 33C99, 11S80, 11T24.}


\begin{abstract}
We define a function in terms of quotients of the $p$-adic gamma function which generalizes earlier work of the author on extending hypergeometric functions over finite fields to the $p$-adic setting.
We prove, for primes $p > 3$, that the trace of Frobenius of any elliptic curve over $\mathbb{F}_p$, whose $j$-invariant does not equal $0$ or $1728$, is just a special value of this function.
This generalizes results of Fuselier and Lennon which evaluate the trace of Frobenius in terms of hypergeometric functions over $\mathbb{F}_p$ when $p \equiv 1 \pmod {12}$.
\end{abstract}

\maketitle


\section{Introduction and Statement of Results}\label{sec_Intro}
Let $\mathbb{F}_p$ denote the finite field with $p$, a prime, elements. Consider $E/\mathbb{Q}$ an elliptic curve with an integral model of discriminant $\Delta(E)$.
We denote $E_p$ the reduction of $E$ modulo $p$. We note that $E_p$ is non-singular, and hence an elliptic curve over $\mathbb{F}_p$, if and only if $p \nmid \Delta(E)$, in which case we say $p$ is a prime of good reduction.
Regardless, we define
\begin{equation}\label{def_ap}
a_p(E) := p+1-\# E_p(\mathbb{F}_p).
\end{equation}
If $p$ is not a prime of good reduction we know $a_p(E) =0,\pm1$ depending on the nature of the singularity.
If $p$ is a prime of good reduction, we refer to $a_p(E)$ as the \emph{trace of Frobenius} as it can be interpreted as the trace of the Frobenius endomorphism of $E/\mathbb{F}_p$.
For a given elliptic curve $E/\mathbb{Q}$, these $a_p$ are important quantities. Recall the Hasse-Weil $L$-function of $E$ (viewed as function of a complex variable $s$) is defined by
$$L(E,s):=\prod_{p \mid \Delta} \frac{1}{1-a_p(E) p^{-s}} \prod_{p \nmid \Delta} \frac{1}{1-a_p(E) p^{-s}+p^{1-2s}}.$$
This Euler product converges for Re$(s)>\frac{3}{2}$ and has analytic continuation to the whole complex plane. 
The Birch and Swinnerton-Dyer conjecture concerns the behavior of $L(E,s)$ at $s=1$.

The main result of this paper relates the trace of Frobenius to a special value of a function which we define in terms of quotients of the $p$-adic gamma function.
Let $\gfp{\cdot}$ denote Morita's $p$-adic gamma function and let $\omega$ denote the Teichm\"{u}ller character of $\mathbb{F}_p$ with $\bar{\omega}$ denoting its character inverse. 
For $x \in \mathbb{Q}$ we let  $\left\lfloor x \right\rfloor$ denote the greatest integer less than or equal to $x$ and
$\langle x \rangle$ denote the fractional part of $x$, i.e. $x- \left\lfloor x \right\rfloor$.
\begin{defi}\label{def_Gp}
Let $p$ be an odd prime and let $t \in \mathbb{F}_p$. For $n \in \mathbb{Z}^{+}$ and $1 \leq i \leq n$, let $a_i, b_i \in \mathbb{Q} \cap \mathbb{Z}_p$.
Then we define  
\begin{multline*}\label{for_GFn}
{_{n}G_{n}}
\biggl[ \begin{array}{cccc} a_1, & a_2, & \dotsc, & a_n \\
 b_1, & b_2, & \dotsc, & b_n \end{array}
\Big| \; t \; \biggr]_p
: = \frac{-1}{p-1}  \sum_{j=0}^{p-2} 
(-1)^{jn}\;
\bar{\omega}^j(t)\\
\times \prod_{i=1}^{n} 
\frac{\biggfp{\langle a_i -\frac{j}{p-1}\rangle}}{\biggfp{\langle a_i \rangle}}
\frac{\biggfp{\langle - b_i +\frac{j}{p-1}\rangle}}{\biggfp{\langle -b_i \rangle}}
(-p)^{-\lfloor{\langle a_i \rangle -\frac{j}{p-1}}\rfloor -\lfloor{\langle -b_i \rangle +\frac{j}{p-1}}\rfloor}.
\end{multline*}
\end{defi}
\noindent Throughout the paper we will refer to this function as ${_{n}G_{n}}[\cdots]$.
The value of ${_{n}G_{n}}[\cdots]$ depends only on the fractional part of the $a$ and $b$ parameters. Therefore, we can assume $0 \leq a_i, b_i <1$.

This function has some very nice properties. 
It generalizes the function defined by the author in \cite{McC4} which exhibits relationships to Fourier coefficients of modular forms. 
This earlier function has only one line of parameters and corresponds to ${_{n}G_{n}}[\cdots]$ when all the bottom line parameters are integral and $t=1$. 
The earlier function also extended, to the $p$-adic setting, hypergeometric functions over finite fields with trivial bottom line parameters. 
In Section \ref{sec_Properties} we will see that ${_{n}G_{n}}[\cdots]$ extends hypergeometric functions over finite fields in their full generality, to the $p$-adic setting. 
By definition, results involving  hypergeometric functions over finite fields will often be restricted to primes in certain congruence classes (see for example \cite{E, F2, L, M, V}). 
The motivation for developing ${_{n}G_{n}}[\cdots]$ is that it can often allow these results to be extended to a wider class of primes \cite{McC4, McC5}, as we exhibit in our main result below. 
We will discuss these properties in more detail in Section \ref{sec_Properties}.

We now state our main result which relates the trace of Frobenius of an elliptic curve over $\mathbb{F}_p$ to a special value of ${_{n}G_{n}}[\cdots]$. 
We first note that if $p>3$ then any elliptic curve over $\mathbb{F}_p$ is isomorphic to an elliptic curve of the form $E:y^2 = x^3+ax+b$, i.e., short Weierstrass form,
and that the trace of Frobenius of isomorphic curves are equal. Let $j(E)$ denote the $j$-invariant of the elliptic curve $E$. 
Let $\phi_p(\cdot)$ be the Legendre symbol modulo $p$.  We will often omit the subscript $p$ when it is clear from the context.
\begin{theorem}\label{thm_trace}
Let $p>3$ be prime. Consider an elliptic curve $E/\mathbb{F}_p$ of the form $E:y^2 = x^3+ax+b$ with $j(E)\neq 0, 1728$. Then
\begin{equation}\label{for_main}
a_p(E) = \phi(b) \cdot p \cdot
{_{2}G_{2}}
\biggl[ \begin{array}{cc} \frac{1}{4},  & \frac{3}{4}\vspace{.05in}\\
 \frac{1}{3},  & \frac{2}{3} \end{array}
\Big| \; {-\frac{27b^2}{4a^3}} \; \biggr]_p. 
\end{equation}
\end{theorem}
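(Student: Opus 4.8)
The plan is to start from the classical point-count formula for the elliptic curve $E:y^2=x^3+ax+b$ in terms of multiplicative characters on $\mathbb{F}_p$, and then to rewrite that character sum as a value of ${_{2}G_{2}}[\cdots]$. First I would recall that, writing $\phi$ for the quadratic character,
\[
\#E_p(\mathbb{F}_p) = p+1+\sum_{x\in\mathbb{F}_p}\phi(x^3+ax+b),
\]
so that $a_p(E) = -\sum_{x}\phi(x^3+ax+b)$. The standard next step (going back to the work of Greene, and used by Fuselier and Lennon) is to expand this last sum using Gauss sums / Jacobi sums, introducing the Teichm\"uller character $\omega$ and its powers, to express $a_p(E)$ in terms of a finite-field hypergeometric function ${_2F_1}$ evaluated at $-27b^2/(4a^3)$ (after completing the cube and normalizing, the relevant invariant of the curve is exactly $j/(1728-j)$, which equals $-27b^2/(4a^3)$ up to the usual constant). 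Concretely I expect to reach an intermediate identity of the shape
\[
a_p(E) = \phi(b)\cdot p \cdot {_2F_1}\!\left(\begin{array}{cc}\omega^{(p-1)/4}, & \omega^{3(p-1)/4}\\[1mm] & \omega^{(p-1)/3}\end{array}\Big| -\tfrac{27b^2}{4a^3}\right)
\]
or a close variant, valid only when $p\equiv1\pmod{12}$ so that the characters $\omega^{(p-1)/4}$ and $\omega^{(p-1)/3}$ actually exist; this is exactly the Fuselier/Lennon regime.

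To remove the congruence restriction, I would invoke the comparison between ${_{n}G_{n}}[\cdots]$ and finite-field hypergeometric functions established in Section~\ref{sec_Properties}: there ${_{n}G_{n}}[\cdots]$ is shown to equal the corresponding finite-field hypergeometric function (up to an explicit sign/normalization) whenever the relevant characters exist, i.e.\ whenever $p$ lies in the appropriate congruence class. So for $p\equiv1\pmod{12}$ the theorem follows by combining that comparison with the intermediate $ {_2F_1}$ identity above. The remaining — and main — work is the case $p\not\equiv1\pmod{12}$, where there is no finite-field hypergeometric function to fall back on and one must argue directly on the $p$-adic side.

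For the general case the strategy is to carry the character-sum manipulation all the way down to Gauss sums, and then to convert every Gauss sum into a $p$-adic gamma value via the Gross–Koblitz formula, $g(\bar\omega^{\,j}) = -\pi^{\,j}\,\Gamma_p(\langle j/(p-1)\rangle)$ (with $\pi$ the usual uniformizer, $\pi^{p-1}=-p$). The point is that the Gross–Koblitz conversion makes sense for every $j$, with no assumption on $p\bmod{12}$: the "missing characters" in the non-congruence cases are replaced by shifts $\langle a_i - j/(p-1)\rangle$ of the rational parameters $a_i\in\{\tfrac14,\tfrac34,\tfrac13,\tfrac23\}$, and the powers of $\pi$ collect, via $\pi^{p-1}=-p$, into exactly the factors $(-p)^{-\lfloor\langle a_i\rangle - j/(p-1)\rfloor - \lfloor\langle -b_i\rangle + j/(p-1)\rfloor}$ appearing in Definition~\ref{def_Gp}. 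So the plan is: (i) rewrite $a_p(E)$ as a single sum over $j$ of a product of Gauss sums times $\bar\omega^j$ of the argument; (ii) apply Gross–Koblitz termwise; (iii) track the powers of $\pi$ and the $\Gamma_p$-arguments, using elementary floor/fractional-part identities, until the expression matches the definition of ${_{2}G_{2}}[\cdots]$ with top parameters $\tfrac14,\tfrac34$ and bottom parameters $\tfrac13,\tfrac23$; (iv) check the leading constants, where the $\phi(b)$ and the factor $p$ should emerge from the $j=0$ normalization and the $(-1)^{jn}$ sign in the definition.

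The main obstacle I anticipate is bookkeeping rather than conceptual: keeping the powers of $\pi$ and the arguments of $\Gamma_p$ correctly aligned through the Gross–Koblitz substitution, especially handling the boundary cases (when some $\langle a_i - j/(p-1)\rangle$ or $\langle -b_i + j/(p-1)\rangle$ hits $0$, or when a Jacobi sum degenerates because two characters coincide or a character becomes trivial). One must also be careful that the intermediate character-sum identity is set up so that the cube $x^3+ax+b$ is handled uniformly for all $p>3$ — this is where the hypothesis $j(E)\neq 0,1728$ enters, guaranteeing $a\neq0$ and $b\neq0$ so that the argument $-27b^2/(4a^3)$ is a well-defined nonzero element of $\mathbb{F}_p$ and $\phi(b)$ makes sense. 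I would treat the degenerate Jacobi-sum terms separately and show they contribute consistently with the $p$-adic gamma formula (typically via $\Gamma_p$ reflection, $\Gamma_p(x)\Gamma_p(1-x)=\pm1$), which is the one place a short dedicated computation is unavoidable.
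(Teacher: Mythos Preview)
Your overall architecture---point count, Gauss sums, then Gross--Koblitz---is the same as the paper's, but there is a genuine missing idea in your step (iii), and a misconception in your handling of the ``easy'' case $p\equiv 1\pmod{12}$.

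First, the split into congruence classes does not work the way you suggest. Lennon's formula (Theorem~\ref{thm_Lennon}) involves a ${_2F_1}$ with top characters $\psi,\psi^5$ of order $12$ and trivial bottom; via Lemma~\ref{lem_G_to_F} this corresponds to ${_2G_2}\bigl[\begin{smallmatrix}1/12,&5/12\\0,&0\end{smallmatrix}\big|\cdot\bigr]$, not to ${_2G_2}\bigl[\begin{smallmatrix}1/4,&3/4\\1/3,&2/3\end{smallmatrix}\big|\cdot\bigr]$. The paper explicitly notes (end of Section~\ref{sec_Properties}) that the $1/12,5/12$ version ``leads to poor results when $p\not\equiv 1\pmod{12}$''. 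So even for $p\equiv 1\pmod{12}$ you cannot reach the statement of Theorem~\ref{thm_trace} just by quoting Lennon and applying the dictionary; the specific parameters $\tfrac14,\tfrac34,\tfrac13,\tfrac23$ have to be produced some other way. The paper accordingly gives a single proof valid for all $p>3$, with no case split.

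Second, and this is the real gap: after the Gauss-sum manipulation and Gross--Koblitz, what you actually obtain is an expression involving $\Gamma_p\!\bigl(\langle\tfrac{-j}{p-1}\rangle\bigr)$, $\Gamma_p\!\bigl(\langle\tfrac{3j}{p-1}\rangle\bigr)$, $\Gamma_p\!\bigl(\langle\tfrac{-4j}{p-1}\rangle\bigr)$, $\Gamma_p\!\bigl(\langle\tfrac{-2j}{p-1}\rangle\bigr)$ (see (\ref{for_ap3})); these come from $g(T^{-j}),g(T^{3j}),g(T^{-4j}),g(T^{-2j})$, the last of which arises via the Hasse--Davenport product formula, which you also do not mention. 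None of these $\Gamma_p$-arguments has the form $\langle a_i-\tfrac{j}{p-1}\rangle$ with $a_i\in\{\tfrac14,\tfrac34,\tfrac13,\tfrac23\}$. Converting $\Gamma_p\!\bigl(\langle\tfrac{tj}{p-1}\rangle\bigr)$ into a product $\prod_{h=0}^{t-1}\Gamma_p\!\bigl(\langle\tfrac{h}{t}+\tfrac{j}{p-1}\rangle\bigr)$ (and its minus-sign variant) is the content of Lemma~\ref{lem_pGamma}, which is a $p$-adic gamma multiplication formula proved from (\ref{for_pGammaMult}). This lemma is the mechanism by which the denominators $4$ and $3$ appear at all, and it is not ``bookkeeping'': without it your step (iii) has no way to produce the target parameters. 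Once Lemma~\ref{lem_pGamma} is applied, the remaining floor-function identities and the reflection (\ref{for_pGammaOneMinus}) are indeed routine, as you anticipate.
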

\noindent Independent of Theorem \ref{thm_trace}, we will see later from Proposition \ref{prop_delta} that the right-hand side of (\ref{for_main}) is $p$-integral. 

Theorem \ref{thm_trace} generalizes results of Fuselier \cite[Theorem 1.2]{F2} and Lennon \cite[Theorem 2.1]{L} which evaluate the trace of Frobenius in terms of hypergeometric functions over $\mathbb{F}_p$ when $p \equiv 1 \pmod {12}$.
The results in \cite{L} are in fact over $\mathbb{F}_q$, for $q \equiv 1 \pmod {12}$ a prime power, and hence allow calculation of $a_p$ up to sign when $p \not\equiv 1 \pmod {12}$ via the relation $a_p^2 = a_{p^2} +2p$. 
Theorem \ref{thm_trace} however gives a direct evaluation of $a_p$ for all primes $p>3$ and resolves this sign issue.

One of the nice features of the main result in \cite{L} is that it is independent of the Weierstrass model of the elliptic curve. Recall an elliptic curve over a field $\mathbb{K}$ in Weierstrass form is given by
\begin{equation}\label{for_WF}
E: y^2 +a_1xy +a_3y=x^3+a_2x^2 +a_4x+a_6,
\end{equation}
with $a_1, a_2, \dots, a_6 \in \mathbb{K}$.
We can define the quantities 
$b_2 := {a_1}^2 +4 a_2;$
$b_4 := 2 a_4 + a_1a_3;$
$b_6 := {a_3}^2 + 4 a_6;$
$b_8 := {a_1}^2 a_6 + 4 a_2 a_6 - a_1 a_3 a_4 + a_2 {a_3}^2 - {a_4}^2;$
$c_4 := b_2^2 - 24b_4;$ and
$c_6 := -b_2^3+36b_2 b_4 -216 b_6,$
in the standard way. These can then be used to calculate 
$\Delta(E)=\frac{c_4^3-c_6^2}{1728}$ and $j(E)=\frac{c_4^3}{\Delta(E)}$.
An admissible change of variables,  $x=u^2 x^{\prime} +r $ and $y= u^3 y^{\prime} + s u^2 x^{\prime} +t$ with $u,r,s,t, \in \mathbb{K}$ and $u \neq 0$, in (\ref{for_WF}) will result in an isomorphic curve also given in Weierstrass form, 
and any two isomorphic curves over $\mathbb{K}$ are related by such an admissible change of variables. 
Two curves related by an admissible change of variables will have the same $j$-invariant but their discriminants will differ by a factor of a twelfth-power, namely $u^{12}$ and their respective $c_i$ quantities will differ by a factor of $u^i$.
This allows the main result in \cite{L}, which is stated in terms of $j(E)$ and $\Delta(E)$, to be expressed independently of the Weierstrass model of the elliptic curve.
We can do something similar with Theorem \ref{thm_trace}.
\begin{cor}\label{cor_trace}
Let $p>3$ be prime. Consider an elliptic curve $E/\mathbb{F}_p$ in Weierstrass form with $j(E)\neq 0, 1728$. Then
$$a_p(E) = \phi(-6\cdot c_6) \cdot p \cdot
{_{2}G_{2}}
\biggl[ \begin{array}{cc} \frac{1}{4},  & \frac{3}{4}\vspace{.05in}\\
 \frac{1}{3},  & \frac{2}{3} \end{array}
\Big| \; 1- \frac{1728}{j(E)} \; \biggr]_p.$$
\end{cor}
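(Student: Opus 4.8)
I would deduce Corollary~\ref{cor_trace} directly from Theorem~\ref{thm_trace} by passing to a short Weierstrass model of $E$ and then tracking how the hypergeometric argument and the quadratic-character factor transform under this reduction.

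\emph{Reduction to short form.} Since $p>3$, the elements $2$ and $3$ are invertible in $\mathbb{F}_p$, so the standard admissible change of variables $(x,y)\mapsto\bigl(\tfrac{x-3b_2}{36},\,\tfrac{y}{108}\bigr)$ carries $E$ to the isomorphic curve $E':y^2=x^3-27c_4\,x-54c_6$, where $c_4,c_6\in\mathbb{F}_p$ are the quantities attached to $E$ above. As noted in the text, isomorphic curves over $\mathbb{F}_p$ have the same trace of Frobenius, so $a_p(E)=a_p(E')$, and it therefore suffices to evaluate the right-hand side of Theorem~\ref{thm_trace} for $E'$.

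\emph{Verifying the hypotheses.} From $j(E)=c_4^3/\Delta(E)$ together with the identity $j(E)-1728=c_6^2/\Delta(E)$, the assumption $j(E)\neq 0,1728$ is equivalent to $c_4\neq 0$ and $c_6\neq 0$ in $\mathbb{F}_p$. Consequently $E'$ is in short form $y^2=x^3+ax+b$ with $a=-27c_4\neq 0$ and $b=-54c_6\neq 0$, its discriminant is nonzero so it is genuinely an elliptic curve, and $j(E')=j(E)\notin\{0,1728\}$; hence Theorem~\ref{thm_trace} applies to $E'$.

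\emph{Transforming the argument and the sign.} With $a=-27c_4$ and $b=-54c_6$, a direct computation gives
\[
-\frac{27b^2}{4a^3}=\frac{c_6^2}{c_4^3}=1-\frac{1728\,\Delta(E)}{c_4^3}=1-\frac{1728}{j(E)},
\]
where the middle equality uses $1728\,\Delta(E)=c_4^3-c_6^2$; thus the hypergeometric arguments agree. For the character factor, $\phi(b)=\phi(-54c_6)=\phi(-6c_6)\cdot\phi(9)=\phi(-6c_6)$ since $9$ is a nonzero square modulo $p$. Plugging these into the conclusion of Theorem~\ref{thm_trace} for $E'$, and using $a_p(E)=a_p(E')$, yields exactly the formula of Corollary~\ref{cor_trace}.

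I do not expect a genuine difficulty here: the argument is just the classical reduction theory of Weierstrass equations in characteristic $>3$, and the only point requiring care is the constant bookkeeping in the last step — checking that the coefficients $-27$ and $-54$ of the short model produce precisely $c_6^2/c_4^3$, and that the spurious factor $9=(-54)/(-6)$ contributes a trivial Legendre symbol.
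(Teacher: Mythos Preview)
Your proof is correct and follows essentially the same route as the paper: pass to a short Weierstrass model, apply Theorem~\ref{thm_trace}, and then verify that $-27b^2/4a^3 = 1 - 1728/j(E)$ and $\phi(b) = \phi(-6c_6)$. The only difference is cosmetic: you choose the specific model $y^2 = x^3 - 27c_4 x - 54c_6$ (note that your explicit change of variables is only the second half of the standard reduction --- one must first complete the square in $y$), whereas the paper takes an arbitrary short model $E'$ and uses the relations $c_6(E) = u^6\,c_6(E')$ and $c_6(E') = -864b$ to obtain $\phi(b) = \phi(-6c_6(E))$.
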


Please refer to \cite{Kn, Si} for a detailed account of any of the properties of elliptic curves mentioned in the above discussion.
The rest of this paper is organized as follows. In Section \ref{sec_Prelim} we recall some basic properties of multiplicative characters, Gauss sums and the $p$-adic gamma function.
We discuss some properties of ${_{n}G_{n}}[\cdots]$ in Section \ref{sec_Properties} including its relationship to hypergeometric functions over finite fields. The proofs of our main results are contained in Section \ref{sec_Proofs}.
Finally, we make some closing remarks in Section \ref{sec_cr}.


\section{Preliminaries}\label{sec_Prelim}
Let $\mathbb{Z}_p$ denote the ring of $p$-adic integers, $\mathbb{Q}_p$ the field of $p$-adic numbers, $\bar{\mathbb{Q}_p}$ the algebraic closure of $\mathbb{Q}_p$, and $\mathbb{C}_p$ the completion of $\bar{\mathbb{Q}_p}$.
\subsection{Multiplicative Characters and Gauss Sums}
Let $\widehat{\mathbb{F}^{*}_{p}}$ denote the group of multiplicative characters of $\mathbb{F}^{*}_{p}$. 
We extend the domain of $\chi \in \widehat{\mathbb{F}^{*}_{p}}$ to $\mathbb{F}_{p}$, by defining $\chi(0):=0$ (including the trivial character $\varepsilon$) and denote $\bar{\chi}$ as the inverse of $\chi$. 
We recall the following orthogonal relations. 
For $\chi \in \widehat{\mathbb{F}_p^{*}}$ we have
\begin{equation}\label{for_TOrthEl}
\sum_{x \in \mathbb{F}_p} \chi(x)=
\begin{cases}
p-1 & \text{if $\chi = \varepsilon$}  ,\\
0 & \text{if $\chi \neq \varepsilon$}  ,
\end{cases}
\end{equation}
and, for $x \in \mathbb{F}_p$ we have
\begin{equation}\label{for_TOrthCh}
\sum_{\chi \in \widehat{\mathbb{F}_p^{*}}} \chi(x)=
\begin{cases}
p-1 & \text{if $x=1$}  ,\\
0 & \text{if $x \neq 1$}  .
\end{cases}
\end{equation}

\noindent We now introduce some properties of Gauss sums. For further details see \cite{BEW}, noting that we have adjusted results to take into account $\varepsilon(0)=0$. 
Let $\zeta_p$ be a fixed primitive $p$-th root of unity in $\bar{\mathbb{Q}_p}$. We define the additive character $\theta : \mathbb{F}_p \rightarrow \mathbb{Q}_p(\zeta_p)$ by $\theta(x):=\zeta_p^{x}$. 
\noindent 
It is easy to see that
\begin{equation}\label{for_AddProp}
\theta(a+b)=\theta(a) \theta(b).
\end{equation}
and
\begin{equation}\label{sum_AddChar}
\sum_{x \in \mathbb{F}_p} \theta(x) = 0.
\end{equation}
We note that $\mathbb{Q}_p$ contains all $(p-1)$-th roots of unity and in fact they are all in $\mathbb{Z}^{*}_p$. Thus we can consider multiplicative characters of $\mathbb{F}_p^{*}$ to be maps $\chi: \mathbb{F}_p^{*} \to \mathbb{Z}_{p}^{*}$. 
Recall then that for $\chi \in \widehat{\mathbb{F}_p^{*}}$, the Gauss sum $g(\chi)$ is defined by 
$g(\chi):= \sum_{x \in \mathbb{F}_p} \chi(x) \theta(x).$
It easily follows from (\ref{for_TOrthCh}) that we can express the additive character as a sum of Gauss sums. Specifically, for $x \in \mathbb{F}_p^*$\hspace{1pt} we have
\begin{equation}\label{for_AddtoGauss}
\theta(x)= \frac{1}{p-1}\sum_{\chi \in \widehat{\mathbb{F}_p^{*}}} g(\bar{\chi})  \, \chi(x).
\end{equation}
The following important result gives a simple expression for the product of two Gauss sums. For $\chi \in \widehat{\mathbb{F}_p^{*}}$ we have
\begin{equation}\label{for_GaussConj}
g(\chi)g(\bar{\chi})=
\begin{cases}
\chi(-1) p & \text{if } \chi \neq \varepsilon,\\
1 & \text{if } \chi= \varepsilon.
\end{cases}
\end{equation}
\noindent Another important product formula for Gauss sums is the Hasse-Davenport formula.
\begin{theorem}[Hasse, Davenport \cite{BEW} Thm 11.3.5]\label{thm_HD}
Let $\chi$ be a character of order $m$ of $\mathbb{F}_p^*$ for some positive integer $m$. For a character $\psi$ of $\mathbb{F}_p^*$ we have
\begin{equation*}
\prod_{i=0}^{m-1} g(\chi^i \psi) = g(\psi^m) \psi^{-m}(m)\prod_{i=1}^{m-1} g(\chi^i).
\end{equation*}
\end{theorem}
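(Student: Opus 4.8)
The plan is to collapse the left-hand product of $m$ Gauss sums to a single Gauss sum times a multivariable Jacobi sum by one scaling substitution, and then to evaluate that Jacobi sum — the latter being where the real content sits.

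\textbf{Step 1: expand and substitute.} Writing out each Gauss sum from its definition and discarding the terms with some $x_i=0$ (which vanish for $1\le i\le m-1$ since $\chi^i\ne\varepsilon$, and for $i=0$ since $\psi(0)=0$), one gets
\[
\prod_{i=0}^{m-1} g(\chi^i\psi)=\sum_{x_0,\dots,x_{m-1}\in\mathbb{F}_p^*}\Bigl(\textstyle\prod_{i=1}^{m-1}\chi^i(x_i)\Bigr)\,\psi(x_0x_1\cdots x_{m-1})\,\theta(x_0+x_1+\cdots+x_{m-1}).
\]
Substituting $x_i=x_0u_i$ for $1\le i\le m-1$ (with $u_i\in\mathbb{F}_p^*$), the $x_0$-dependence of the summand concentrates into $(\chi^{m(m-1)/2}\psi^m)(x_0)$ and $\theta\bigl(x_0(1+u_1+\cdots+u_{m-1})\bigr)$; carrying out the $x_0$-sum by $\sum_{x_0\in\mathbb{F}_p^*}\alpha(x_0)\theta(cx_0)=\bar\alpha(c)g(\alpha)$ for $c\ne0$ and by (\ref{for_TOrthEl}) for the term $1+\sum u_i=0$ leaves $g(\chi^{m(m-1)/2}\psi^m)$ times a sum over $u_1,\dots,u_{m-1}$. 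Since $\chi^{m(m-1)/2}=\varepsilon$ for $m$ odd, this prefactor is simply $g(\psi^m)$; for $m$ even it is $g(\chi^{m/2}\psi^m)$ with $\chi^{m/2}$ a quadratic character, and the extra twist is carried along to be re-absorbed at the end. Finally, rescaling $u_i\mapsto(1+\sum u_j)\,w_i$ homogenizes the constraint and turns the residual sum into the $m$-variable Jacobi sum $J\bigl(\psi,\chi\psi,\chi^2\psi,\dots,\chi^{m-1}\psi\bigr):=\sum_{w_0+\cdots+w_{m-1}=1}\psi(w_0)\prod_{i=1}^{m-1}(\chi^i\psi)(w_i)$.

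\textbf{Step 2 (the main obstacle): evaluate the Jacobi sum.} It remains to prove that this Jacobi sum equals $\psi^{-m}(m)\prod_{i=1}^{m-1}g(\chi^i)$ (for $m$ even, a bookkeeping factor converting $g(\chi^{m/2}\psi^m)$ back to $g(\psi^m)$ is absorbed here). This step genuinely needs new input: invoking the Gauss--Jacobi identity $J(\alpha_0,\dots,\alpha_{m-1})=\bigl(\prod_i g(\alpha_i)\bigr)/g(\alpha_0\cdots\alpha_{m-1})$ only returns a tautology, and substitutions respecting the ``arithmetic-progression'' symmetry of the characters $\chi^i\psi$ loop back to the same sum. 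I would argue by induction on $m$: for $m=m_1m_2$ with $\gcd(m_1,m_2)=1$, factor $\chi=\chi_1\chi_2$ with $\chi_j$ of order $m_j$ and apply the relations for $m_1$ and $m_2$ successively, reducing to the case $m$ prime; for $m$ prime, the Teichm\"{u}ller factor $\psi^{-m}(m)$ is produced by rescaling $w_i\mapsto m^{-1}w_i$ (which symmetrizes the affine constraint), after which the $\psi$-dependence of the remaining sum cancels and one is left with the value at $\psi=\varepsilon$, namely $\prod_{i=1}^{m-1}g(\chi^i)$ (using $g(\varepsilon)=-1$, which follows from (\ref{sum_AddChar})). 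Making the prime-$m$ evaluation come out cleanly, with every cross-term accounted for, is the delicate heart of the argument.

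\textbf{Step 3: degenerate cases and assembly.} Throughout Steps 1--2 one must isolate the boundary situations — $\psi^m=\varepsilon$, or some $\chi^i\psi$ or a partial product of them trivial — in which Gauss sums collapse to $-1$ and Jacobi-sum evaluations pick up extra terms of size $O(p)$; in each such case the identity is checked directly from the orthogonality relations (\ref{for_TOrthEl}) and (\ref{for_TOrthCh}), and these corrections are precisely what makes the formula hold with no side hypotheses. Multiplying the prefactor $g(\psi^m)$ against the evaluated Jacobi sum (and re-absorbing the quadratic twist when $m$ is even) then yields the stated product relation.
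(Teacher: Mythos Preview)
The paper does not prove this theorem. Theorem~\ref{thm_HD} is quoted from \cite[Thm~11.3.5]{BEW} as a known result in the preliminaries section, with no proof or sketch given; it is then invoked exactly once, in the proof of Theorem~\ref{thm_trace}, as a black box. So there is nothing in the paper's treatment to compare your proposal against.

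That said, a brief comment on the proposal itself: Step~1 is the standard opening move and is fine. Step~2, however, is where the actual theorem lives, and you have not carried it out --- you describe a plan (reduce to $m$ prime by multiplicativity, then rescale by $m^{-1}$ and argue that the $\psi$-dependence drops out), but the claim that ``the $\psi$-dependence of the remaining sum cancels'' is exactly what needs proving and is not obvious from anything you have written. The Jacobi sum you arrive at is, via the Gauss--Jacobi identity, equivalent to the original statement (as you yourself note), so the induction/rescaling scheme must somewhere supply genuinely new information, and your sketch does not make clear where. The standard proofs in the literature (e.g.\ in \cite{BEW}) proceed quite differently, typically via Stickelberger's congruence or, in modern treatments, via the Gross--Koblitz formula together with the multiplication formula (\ref{for_pGammaMult}) for $\Gamma_p$ --- the latter route is in fact very natural in the context of this paper, since both ingredients are already on the table.
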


We now recall a formula for counting zeros of polynomials in affine space using the additive character.
If $f(x_1, x_2, \ldots, x_n) \in \mathbb{F}_p[x_1, x_2, \ldots, x_n]$, then the number of points, $N_p$, in $\mathbb{A}^n(\mathbb{F}_p)$ satisfying
$f(x_1, x_2, \ldots, x_n) =0$ is given by
\begin{equation}\label{for_CtgPts}
p N_p = p^n +\sum_{y \in \mathbb{F}_p^*} \sum_{x_1, x_2, \ldots, x_n \in \mathbb{F}_p}
\theta(y \: f(x_1, x_2, \ldots, x_n)) \; .
\end{equation}

\subsection{$p$-adic preliminaries}\label{subsec_padicPrelim}
We define the Teichm\"{u}ller character to be the primitive character $\omega: \mathbb{F}_p \rightarrow\mathbb{Z}^{*}_p$ satisfying $\omega(x) \equiv x \pmod p$ for all $x \in \{0,1, \ldots, p-1\}$.
We now recall the $p$-adic gamma function. For further details, see \cite{Ko}.
Let $p$ be an odd prime.  For $n \in \mathbb{Z}^{+}$ we define the $p$-adic gamma function as
\begin{align*}
\gfp{n} &:= {(-1)}^n \prod_{\substack{0<j<n\\p \nmid j}} j \\
\intertext{and extend to all $x \in\mathbb{Z}_p$ by setting $\gfp{0}:=1$ and} 
\gfp{x} &:= \lim_{n \rightarrow x} \gfp{n}
\end{align*}
for $x\neq 0$, where $n$ runs through any sequence of positive integers $p$-adically approaching $x$. 
This limit exists, is independent of how $n$ approaches $x$, and determines a continuous function
on $\mathbb{Z}_p$ with values in $\mathbb{Z}^{*}_p$.
We now state a product formula for the $p$-adic gamma function.
If $m\in\mathbb{Z}^{+}$, $p \nmid m$ and $x=\frac{r}{p-1}$ with $0\leq r \leq p-1$ then
\begin{equation}\label{for_pGammaMult}
\prod_{h=0}^{m-1} \gfp{\tfrac{x+h}{m}}=\omega\left(m^{(1-x)(1-p)}\right)
\gfp{x} \prod_{h=1}^{m-1} \gfp{\tfrac{h}{m}}.
\end{equation}
We note also that
\begin{equation}\label{for_pGammaOneMinus}
\gfp{x}\gfp{1-x} = {(-1)}^{x_0},
\end{equation}
where $x_0 \in \{1,2, \dotsc, {p}\}$ satisfies $x_0 \equiv x \pmod {p}$.
\noindent The Gross-Koblitz formula \cite{GK} allows us to relate Gauss sums and the $p$-adic gamma function. Let $\pi \in \mathbb{C}_p$ be the fixed root of $x^{p-1}+p=0$ which satisfies ${\pi \equiv \zeta_p-1 \pmod{{(\zeta_p-1)}^2}}$. Then we have the following result.
\begin{theorem}[Gross, Koblitz \cite{GK}]\label{thm_GrossKoblitz}
For $ j \in \mathbb{Z}$,
\begin{equation*} 
g(\bar{\omega}^j)=-\pi^{(p-1) \langle{\frac{j}{p-1}}\rangle} \: \gfp{\langle{\tfrac{j}{p-1}}\rangle}.
\end{equation*}
\end{theorem}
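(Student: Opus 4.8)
The statement is the Gross--Koblitz formula, the deepest of the preliminary inputs; although it is quoted from \cite{GK}, I sketch the Dwork-theoretic proof behind it. First I would reduce to the range $0 \le j \le p-2$. Both sides depend on $j$ only through its residue modulo $p-1$: the Gauss sum because $\bar{\omega}^{j}$ does, and the right-hand side because $\langle j/(p-1)\rangle$ is unchanged under $j \mapsto j+(p-1)$ (no spurious factor of $\pi^{p-1}$ appears, since the exponent is $(p-1)\langle j/(p-1)\rangle$ rather than $j$). Hence we may assume $0\le j\le p-2$, so that $\langle j/(p-1)\rangle = j/(p-1)$ and the target becomes $g(\bar{\omega}^j) = -\pi^{j}\,\gfp{j/(p-1)}$. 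As a sanity check, for $j=0$ the left side is $\sum_{x\in\mathbb{F}_p^*}\theta(x) = -1$ by (\ref{sum_AddChar}), matching $-\pi^{0}\gfp{0} = -1$.

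The core idea, due to Dwork, is to replace the additive character $\theta(x)=\zeta_p^{x}$ by a $p$-adic analytic function evaluated at Teichm\"{u}ller representatives. Let $E_p(X)=\exp\!\big(\sum_{i\ge 0} X^{p^i}/p^i\big)$ be the Artin--Hasse exponential, which has $p$-integral coefficients. With $\pi$ the root of $\pi^{p-1}+p=0$ normalized as in the statement, Dwork's lemma gives $E_p(\pi)=\zeta_p$; since the Teichm\"{u}ller lift satisfies $\omega(t)^{p^i}=\omega(t)$, the Artin--Hasse sum collapses and one obtains $\zeta_p^{t}=E_p(\pi\,\omega(t))$ for every $t\in\mathbb{F}_p$. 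Writing $E_p(\pi X)=\sum_{m\ge0}\lambda_m X^m$, I would substitute this into the Gauss sum, interchange the two summations, and apply the orthogonality relation (\ref{for_TOrthEl}): because $\sum_{t}\omega^{m-j}(t)$ vanishes unless $m\equiv j\pmod{p-1}$, the Gauss sum collapses to $g(\bar{\omega}^j)=(p-1)\sum_{k\ge0}\lambda_{j+k(p-1)}$, a sum of Dwork coefficients along a single residue class.

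It remains to identify this coefficient sum with $-\pi^{j}\gfp{j/(p-1)}/(p-1)$. Extracting the expected power $\pi^{j}$ (the minimal $\pi$-adic order occurring in the class $m\equiv j$) reduces matters to showing that the remaining unit equals $\gfp{j/(p-1)}$ up to sign. This is the technical heart and the main obstacle: one must control the $\pi$-adic valuations and the exact values of the $\lambda_m$, using Dwork's congruences (equivalently, the first-order $p$-adic differential equation satisfied by the splitting function), and then recognize the resulting $p$-adically convergent product as the defining limit of Morita's $\gfp{\cdot}$ through products of integers prime to $p$. The functional equations already recorded for $\Gamma_p$, namely the multiplication formula (\ref{for_pGammaMult}) and the reflection formula (\ref{for_pGammaOneMinus}), together with the Hasse--Davenport relation (Theorem \ref{thm_HD}) on the Gauss-sum side, serve to cross-check this identification: both sides transform compatibly under $j\mapsto p-1-j$ and under the $m$-fold distribution relations, which pins down the unit and closes the argument. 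I would expect the convergence bookkeeping for $E_p(\pi X)$ and the precise coefficient-to-gamma matching to consume the bulk of the work, whereas the reduction and the orthogonality step are routine.
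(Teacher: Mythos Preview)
The paper does not prove Theorem~\ref{thm_GrossKoblitz}; it is quoted from \cite{GK} as a preliminary and then used as a black box in the proofs of Lemma~\ref{lem_G_to_F} and Theorem~\ref{thm_trace}. So there is no argument in the paper to compare your sketch against.

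Your outline is nonetheless a recognizable account of the standard Dwork--Boyarsky route (as in \cite{Ko}): realize the additive character $p$-adically via the Artin--Hasse series, collapse the Gauss sum by orthogonality to a single residue class of Dwork coefficients, and identify that sum with a value of $\Gamma_p$. One step to tighten: the identity $\zeta_p^{\,t}=E_p(\pi\,\omega(t))$ is true, but it does not follow just from ``$\omega(t)^{p^i}=\omega(t)$, so the Artin--Hasse sum collapses.'' The formal equality $E_p(Y)=\exp\!\bigl(\sum_{i\ge 0} Y^{p^i}/p^i\bigr)$ cannot be evaluated at $Y=\pi\omega(t)$ by first summing the inner series and then applying $\exp$: the two leading inner terms each have the borderline valuation $1/(p-1)$ (and in fact cancel), so that naive substitute-then-exponentiate computation produces an element far too close to $1$ to be a primitive $p$-th root of unity. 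Establishing that $t\mapsto E_p(\pi\,\omega(t))$ really is the additive character requires a separate argument (for instance via the overconvergent series $\exp(\pi(X-X^p))$ and its functional equation), and this is itself one of the substantive lemmas you would have to supply.
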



\section{Properties of ${_{n}G_{n}}[\cdots]$.}\label{sec_Properties}
As both $\gfp{\cdot}$ and $\omega(\cdot)$ are in $\mathbb{Z}^{*}_p$, we see immediately from its definition that ${_{n}G_{n}}[\cdots]_p \in p^{\delta}\mathbb{Z}_p$ for some $\delta \in \mathbb{Z}$. 
We describe $\delta$ explicitly in the following proposition.
We first define 
$$
\langle b_i \rangle ^{*} := 1- \langle -b_i \rangle = 
\begin{cases}
\langle b_i \rangle & \textup{if }  b_i \notin \mathbb{Z},\\
1 &  \textup{if }  b_i  \in \mathbb{Z}.
\end{cases}
$$
\begin{prop}\label{prop_delta}
Let $p$ be an odd prime and let $t \in \mathbb{F}_p$. Let $n \in \mathbb{Z}^{+}, 1 \leq i \leq n$ and $a_i, b_i \in \mathbb{Q} \cap \mathbb{Z}_p$. 
For $j \in \mathbb{Z}$ we define
$$f(j):= \#\{a_i \mid \langle a_i \rangle  < \tfrac{j}{p-1}, 1 \leq i \leq n\} - \#\{b_i \mid \langle b_i \rangle^{*} \leq \tfrac{j}{p-1}, 1 \leq i \leq n\}.$$
Then
$${_{n}G_{n}}
\biggl[ \begin{array}{cccc} a_1, & a_2, & \dotsc, & a_n \\
 b_1, & b_2, & \dotsc, & b_n \end{array}
\Big| \; t \; \biggr]_p
\in p^{\delta}\mathbb{Z}_p,$$ 
where
$$\delta = \textup{Min} \{f(j) \mid 0 \leq j \leq p-2\}.$$
\end{prop}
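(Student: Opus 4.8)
The plan is to analyze the $p$-adic valuation of each summand in the definition of ${_{n}G_{n}}[\cdots]_p$ and show that the minimal valuation over $j$ equals $f(j)$, so that the whole sum lies in $p^{\delta}\mathbb{Z}_p$ with $\delta = \mathrm{Min}\{f(j) \mid 0 \le j \le p-2\}$. Since $\bar\omega^j(t)$, the ratios $\gfp{\cdot}/\gfp{\cdot}$, and $(-1)^{jn}$ are all units in $\mathbb{Z}_p^*$ (using that $\gfp{\cdot}$ takes values in $\mathbb{Z}_p^*$ and $\omega(\cdot)\in\mathbb{Z}_p^*$), the only contribution to the valuation of the $j$-th term comes from the power of $-p$, namely the exponent
$$
e(j) := \sum_{i=1}^{n}\Bigl(-\lfloor \langle a_i\rangle - \tfrac{j}{p-1}\rfloor - \lfloor \langle -b_i\rangle + \tfrac{j}{p-1}\rfloor\Bigr).
$$
So the first step is to rewrite $e(j)$ in a clean form.

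First I would simplify the floor terms. Since $0 \le \langle a_i\rangle < 1$ and $0 \le \tfrac{j}{p-1} \le 1$ for $0 \le j \le p-2$, the quantity $\langle a_i\rangle - \tfrac{j}{p-1}$ lies in $(-1,1)$, hence $\lfloor \langle a_i\rangle - \tfrac{j}{p-1}\rfloor$ equals $0$ if $\langle a_i\rangle \ge \tfrac{j}{p-1}$ and equals $-1$ if $\langle a_i\rangle < \tfrac{j}{p-1}$. Therefore $-\lfloor \langle a_i\rangle - \tfrac{j}{p-1}\rfloor = \#\{a_i \mid \langle a_i\rangle < \tfrac{j}{p-1}\}$ after summing over $i$. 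Similarly $\langle -b_i\rangle + \tfrac{j}{p-1} \in [0,2)$, and I would use the identity $\langle -b_i\rangle = 1 - \langle b_i\rangle^{*}$ from the paper's definition of $\langle b_i\rangle^{*}$ to rewrite $\langle -b_i\rangle + \tfrac{j}{p-1} = 1 - \langle b_i\rangle^{*} + \tfrac{j}{p-1}$; its floor is $1$ exactly when $\langle b_i\rangle^{*} \le \tfrac{j}{p-1}$ and $0$ otherwise (the edge cases $\langle b_i\rangle^{*}=1$, $\tfrac{j}{p-1}=1$ need a quick check, but fall into line). Summing, $-\sum_i \lfloor \langle -b_i\rangle + \tfrac{j}{p-1}\rfloor = -\#\{b_i \mid \langle b_i\rangle^{*} \le \tfrac{j}{p-1}\}$. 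Combining the two pieces gives exactly $e(j) = f(j)$.

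The remaining step is the standard valuation-of-a-sum argument: each term $T_j$ of the sum satisfies $\mathrm{ord}_p(T_j) = f(j)$ (it is a unit times $(-p)^{f(j)}$), hence the prefactor $\tfrac{-1}{p-1}$ being a $p$-adic unit, $\mathrm{ord}_p\bigl({_{n}G_{n}}[\cdots]_p\bigr) \ge \mathrm{Min}_j\, \mathrm{ord}_p(T_j) = \mathrm{Min}\{f(j) \mid 0 \le j \le p-2\} = \delta$, which is precisely the claimed membership in $p^{\delta}\mathbb{Z}_p$. I would remark that the proposition only asserts a lower bound on the valuation (membership in $p^\delta \mathbb{Z}_p$), not that $\delta$ is sharp, so no cancellation analysis among the terms of minimal valuation is needed.

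The main obstacle is purely bookkeeping: getting the boundary cases of the floor functions right when $\tfrac{j}{p-1} = 1$ (i.e. $j = p-1$, which is excluded, but $j = p-2$ gives $\tfrac{j}{p-1}$ close to but not equal to $1$) and when some $a_i$ or $b_i$ is an integer, so that $\langle a_i\rangle = 0$ or $\langle b_i\rangle^{*} = 1$. One must check that the strict inequality ``$\langle a_i\rangle < \tfrac{j}{p-1}$'' and the non-strict ``$\langle b_i\rangle^{*} \le \tfrac{j}{p-1}$'' in the definition of $f(j)$ match the floor evaluations exactly; this is the only place where care is required, and it is where the asymmetry between the $a$-parameters and the $b$-parameters (and the reason for introducing $\langle b_i\rangle^{*}$ rather than $\langle b_i\rangle$) becomes visible.
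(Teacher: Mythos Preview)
Your proposal is correct and takes essentially the same approach as the paper's proof: identify the unit factors $\gfp{\cdot}$, $\omega(\cdot)$, $\tfrac{1}{p-1}$, reduce the valuation of each summand to the exponent of $-p$, and evaluate the two floor functions exactly as you do to obtain $e(j)=f(j)$. The paper's argument is the same computation stated in a single sentence; your version simply spells out the case analysis and boundary checks in more detail.
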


\begin{proof}
As $\gfp{\cdot}, \omega(\cdot)$ and  $\frac{1}{p-1}$ are all in $\mathbb{Z}^{*}_p$, the result follows from noting that 
\begin{align*}
\lfloor{\langle a_i \rangle -\tfrac{j}{p-1}}\rfloor =
\begin{cases}
-1 & \textup{if }  \langle a_i \rangle < \frac{j}{p-1},\\
0 &  \textup{if }  \langle a_i \rangle \geq \frac{j}{p-1},
\end{cases}
&& \textup{and} && 
\lfloor{\langle -b_i \rangle +\tfrac{j}{p-1}}\rfloor =
\begin{cases}
1 & \textup{if }  \langle b_i \rangle ^{*}\leq \frac{j}{p-1},\\
0 &  \textup{if }  \langle b_i \rangle ^{*} > \frac{j}{p-1}.
\end{cases} 
\end{align*} 
\end{proof}

We note that ${_{n}G_{n}}[\cdots]$ generalizes the function defined by the author in \cite{McC4}. 
This earlier function has only one line of parameters and corresponds to ${_{n}G_{n}}[\cdots]$ when all the bottom line parameters are integral and $t=1$. 
Therefore the results from \cite{McC4, McC5} can be restated using ${_{n}G_{n}}[\cdots]$. 
The motivation for developing ${_{n}G_{n}}[\cdots]$ and its predecessor was to allow results involving hypergeometric functions over finite fields, which are often restricted to primes in certain congruence classes, to be extended to a wider class of primes.
While the function defined in \cite{McC4} extended, to the $p$-adic setting, hypergeometric functions over finite fields with trivial bottom line parameters, we now show, in Lemma \ref{lem_G_to_F}, that ${_{n}G_{n}}[\cdots]$ extends hypergeometric functions over finite fields in their full generality.

Hypergeometric functions over finite fields were originally defined by Greene \cite{G}, who first established these functions as analogues of classical hypergeometric functions.
Functions of this type were also introduced by Katz \cite{K} about the same time.
In the present article we use a normalized version of these functions defined by the author in \cite{McC6}, which is more suitable for our purposes.
The reader is directed to \cite[\S 2]{McC6} for the precise connections among these three classes of functions.

\begin{defi}\cite[Definition 1.4]{McC6}\label{def_F} 
For $A_0,A_1,\dotsc, A_n, B_1, \dotsc, B_n \in \widehat{\mathbb{F}_p^{*}}$ and $x \in \mathbb{F}_{p}$ define
\begin{multline}\label{def_HypFnFF}
{_{n+1}F_{n}} {\biggl( \begin{array}{cccc} A_0, & A_1, & \dotsc, & A_n \\
 \phantom{B_0,} & B_1, & \dotsc, & B_n \end{array}
\Big| \; x \biggr)}_{p}\\
:= \frac{1}{p-1}  \sum_{\chi \in \widehat{\mathbb{F}_p^{*}}} 
\prod_{i=0}^{n} \frac{g(A_i \chi)}{g(A_i)}
\prod_{j=1}^{n} \frac{g(\bar{B_j \chi})}{g(\bar{B_j})}
 g(\bar{\chi})
 \chi(-1)^{n+1}
 \chi(x).
 \end{multline}
\end{defi}
\noindent Many of the results concerning hypergeometric functions over finite fields that we quote from other articles were originally stated using Greene's function. 
If this is the case, note then that we have reformulated them in terms ${_{n+1}F_{n}}(\cdots)$ as defined above.  

We have the following relationship between ${_{n}G_{n}}[\cdots]$ and ${_{n+1}F_{n}}(\cdots)$.
\begin{lemma}\label{lem_G_to_F}
For a fixed odd prime $p$, let $A_i, B_k \in \widehat{\mathbb{F}_p^{*}}$ be given by $\bar{\omega}^{a_i(p-1)}$ and $\bar{\omega}^{b_k(p-1)}$ respectively, where $\omega$ is the Teichm\"{u}ller character . Then
\begin{equation*}
{_{n+1}F_{n}} {\biggl( \begin{array}{cccc} A_0, & A_1, & \dotsc, & A_n \\
 \phantom{B_0,} & B_1, & \dotsc, & B_n \end{array}
\Big| \; t \biggr)}_{p}
=
{_{n+1}G_{n+1}}
\biggl[ \begin{array}{cccc} a_0, & a_1, & \dotsc, & a_n \\
 0, & b_1, & \dotsc, & b_n \end{array}
\Big| \; t^{-1} \; \biggr]_p.
\end{equation*}
\end{lemma}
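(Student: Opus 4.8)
The plan is to start from the definition of ${_{n+1}F_{n}}(\cdots)$ in (\ref{def_HypFnFF}) and rewrite every Gauss sum using the Gross--Koblitz formula (Theorem \ref{thm_GrossKoblitz}), thereby converting the character sum over $\widehat{\mathbb{F}_p^*}$ into a sum over $j \in \{0,1,\dotsc,p-2\}$ of quotients of $p$-adic gamma functions. Concretely, I would parametrize the sum by writing $\chi = \bar\omega^{j}$, so that the summation index $\chi \in \widehat{\mathbb{F}_p^*}$ becomes $0 \leq j \leq p-2$. Since $A_i = \bar\omega^{a_i(p-1)}$, we have $A_i \chi = \bar\omega^{a_i(p-1)+j}$, and by Gross--Koblitz $g(A_i\chi) = -\pi^{(p-1)\langle \frac{a_i(p-1)+j}{p-1}\rangle}\gfp{\langle a_i + \frac{j}{p-1}\rangle}$; similarly for $g(A_i)$, $g(\bar{B_k\chi})$, $g(\bar{B_k})$, and $g(\bar\chi)$. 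Wait --- I should double check the sign of the shift: in Lemma \ref{lem_G_to_F} the argument of ${_{n+1}G_{n+1}}$ is $t^{-1}$, and $\chi(t)$ versus $\bar\omega^j(t)$ will produce the inversion, so I expect the exponent bookkeeping to force $\chi = \omega^{j}$ or $\bar\omega^{-j}$ rather than $\bar\omega^{j}$; I will fix the precise convention so that $\bar\omega^j(t^{-1}) = \omega^j(t) = \chi(t)$ matches. This is the kind of index-chasing that is routine but must be done carefully.

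The next step is to collect the powers of $\pi$. Each Gauss sum contributes a factor $\pi^{(p-1)\langle \cdot \rangle}$, and since $\pi^{p-1} = -p$, all these combine into a single power of $-p$. The exponent of $-p$ will be a sum of fractional parts; the key algebraic identity needed is that $\langle a_i + \frac{j}{p-1}\rangle - \langle a_i\rangle = \langle\frac{j}{p-1}\rangle - \lfloor \langle a_i\rangle + \frac{j}{p-1}\rfloor$ when one uses that $\langle a_i + \frac{j}{p-1}\rangle = \langle \langle a_i\rangle + \frac{j}{p-1}\rangle$, and analogously $\langle -b_k + \frac{j}{p-1}\rangle - \langle -b_k\rangle$. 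Matching the definition of ${_{n}G_{n}}[\cdots]$ in Definition \ref{def_Gp}, where the power of $-p$ appears as $-\lfloor \langle a_i\rangle - \frac{j}{p-1}\rfloor - \lfloor\langle -b_i\rangle + \frac{j}{p-1}\rfloor$, I will need the identity $-\lfloor \langle a_i\rangle - \frac{j}{p-1}\rfloor = \lfloor \langle a_i\rangle + \frac{j}{p-1}\rfloor$ when $\frac{j}{p-1} \in [0,1)$ and $\langle a_i\rangle \in [0,1)$, together with the corresponding statement on the $b$ side; here one must be slightly careful about the boundary cases $\langle a_i\rangle = 0$ and $\langle -b_i\rangle = 0$, which is exactly why the asterisk notation $\langle b_i\rangle^*$ was introduced in Proposition \ref{prop_delta}. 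Once the $-p$ exponent matches, the $-1$ factors from the $n+2$ Gauss sums in the numerator (there are $n+1$ factors $g(A_i\chi)$ wait, $n+1$ of them for $i=0,\dots,n$, plus $n$ factors $g(\bar{B_k\chi})$, plus one factor $g(\bar\chi)$) and the $n+1$ factors in the denominators must be reconciled against the $(-1)^{jn}$ — here $(-1)^{j(n+1)}$ — appearing in Definition \ref{def_Gp}; the extra sign should be absorbable using $\chi(-1)^{n+1}$ from (\ref{def_HypFnFF}) and the parity of $j$, since $\omega^j(-1) = (-1)^j$.

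The final step is bookkeeping: after the substitutions, the $\frac{1}{p-1}$ prefactor of ${_{n+1}F_{n}}(\cdots)$ must match the $\frac{-1}{p-1}$ prefactor of ${_{n+1}G_{n+1}}[\cdots]$, with the missing sign again coming from the $-1$'s in the Gauss-sum expansions (each $g(\bar\omega^j) = -\pi^{\cdots}\gfp{\cdots}$ carries a $-1$, and after cancellation between numerator and denominator Gauss sums one net $-1$ should survive). I would organize the proof as a single displayed chain of equalities: start with (\ref{def_HypFnFF}), substitute Gross--Koblitz everywhere, reindex $\chi \mapsto j$, separate the $\pi$-powers from the $\gfp{\cdot}$-quotients, apply the floor-function identities, and read off Definition \ref{def_Gp} with parameters $(a_0,\dots,a_n)$ on top and $(0,b_1,\dots,b_n)$ on the bottom. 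The main obstacle I anticipate is not conceptual but combinatorial: getting all the signs and floor/ceiling boundary conventions exactly right, particularly the interplay between $\langle x\rangle$, $\langle x\rangle^*$, and the behavior at $j=0$ where $\langle \frac{j}{p-1}\rangle = 0$. I would handle this by treating the parameters $a_i, b_k$ as lying in $[0,1)$ from the outset (which is legitimate since ${_{n}G_{n}}[\cdots]$ depends only on fractional parts) and invoking (\ref{for_pGammaOneMinus}) only if needed for the degenerate case where some $a_i$ or $b_k$ is an integer — in particular for the mandatory bottom parameter $0$, where $\gfp{\langle -0\rangle} = \gfp{0} = 1$ and $\gfp{\langle -0 + \frac{j}{p-1}\rangle} = \gfp{\langle \frac{j}{p-1}\rangle}$, which reproduces the $g(\bar\chi)/g(\bar\varepsilon) = g(\bar\chi)$ factor in (\ref{def_HypFnFF}).
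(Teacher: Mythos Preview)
Your approach is correct and essentially identical to the paper's own proof: start from Definition~\ref{def_F}, parametrize $\chi=\omega^{j}$, apply the Gross--Koblitz formula (Theorem~\ref{thm_GrossKoblitz}) to each Gauss-sum quotient $g(A_i\chi)/g(A_i)$, $g(\bar{B_k\chi})/g(\bar{B_k})$, and $g(\bar\chi)$, then collect the $\pi$-powers using $\pi^{p-1}=-p$ and read off Definition~\ref{def_Gp}. The paper compresses all of the sign, floor-function, and boundary bookkeeping you outlined into the phrase ``tidying up,'' but the three displayed Gross--Koblitz identities it records are exactly the ones your plan produces once you fix $\chi=\omega^{j}$ (so that $A_i\chi=\bar\omega^{a_i(p-1)-j}$ and the arguments come out as $\langle a_i-\tfrac{j}{p-1}\rangle$, matching the $t^{-1}$ on the $G$-side).
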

\begin{proof}
Starting from the definition of  ${_{n+1}F_{n}}(\cdots)$,  we convert the right-hand side of  (\ref{def_HypFnFF}) to an expression involving the $p$-adic gamma function and Teichm\"{u}ller character.
We note  $\widehat{\mathbb{F}_p^{*}}$ can be given by $\{\omega^j \mid 0 \leq j \leq p-2\}$. Then, straightforward applications of  the Gross-Koblitz formula (Theorem \ref{thm_GrossKoblitz}) with $\chi=\omega^j$ yield
$$g(\bar{\chi}) = -\pi^j \biggfp{\tfrac{j}{p-1}},$$
$$\frac{g(A_i \chi)}{g(A_i)} = \pi^{-j-(p-1) (\lfloor{a_i-\frac{j}{p-1}}\rfloor  - \lfloor{a_i}\rfloor)} \;    \frac{\biggfp{\langle a_i -\frac{j}{p-1}\rangle}}{\biggfp{\langle a_i \rangle}}$$
and
$$\frac{g(\bar{B_k \chi})}{g(\bar{B_k})} = \pi^{j-(p-1) (\lfloor{-b_k+\frac{j}{p-1}}\rfloor  - \lfloor{-b_k}\rfloor)} \;    \frac{\biggfp{\langle -b_k +\frac{j}{p-1}\rangle}}{\biggfp{\langle -b_k \rangle}},$$
where $\pi$ is as defined in Section \ref{subsec_padicPrelim}. Substituting these expressions into (\ref{def_HypFnFF}) and tidying up yields the result. 
\end{proof}

We note that if $\chi \in \widehat{\mathbb{F}_p^{*}}$ is a character of order $d$ and is given by $\bar{\omega}^{x(p-1)}$  then $x = \frac{m}{d} \in \mathbb{Q}$ and $p\equiv 1 \pmod d$.
Therefore, given a hypergeometric function over $\mathbb{F}_p$ whose arguments are characters of prescribed order, the function will only be defined for primes $p$ in certain congruence classes.
By Lemma \ref{lem_G_to_F}, for primes in these congruence classes, the finite field hypergeometric function will be related to an appropriate ${_{n}G_{n}}[\cdots]$ function.
However this corresponding ${_{n}G_{n}}[\cdots]$ will be defined at all primes not dividing the orders of the particular characters appearing in the the finite field hypergeometric function.  
This opens the possibility of extending results involving hypergeometric functions over finite fields to all but finitely many primes.

For example, we have the following result from \cite{McC5} which relates a special value of the hypergeometric function over finite fields to a $p$-th Fourier coefficient of a certain modular form.
Let 
\begin{equation}\label{for_ModForm}
f(z):= f_1(z)+5f_2(z)+20f_3(z)+25f_4(z)+25f_5(z)=\sum_{n=1}^{\infty} c(n) q^n
\end{equation}
where $f_i(z):=\eta^{5-i}(z) \hspace{2pt} \eta^4(5z) \hspace{2pt} \eta^{i-1}(25z)$,
$\eta(z):=q^{\frac{1}{24}} \prod_{n=1}^{\infty}(1-q^n)$ is the Dedekind eta function and $q:=e^{2 \pi i z}$. Then $f$ is a cusp form of weight four on the congruence subgroup $\Gamma_0(25)$.  
\begin{theorem}\cite[Corollary 1.6]{McC5}\label{Cor_GHStoMod}
If $p \equiv 1 \pmod 5$ is prime, $\chi_5 \in \widehat{\mathbb{F}^{*}_{p}}$ is a character of order $5$ and $c(p)$ is as defined in (\ref{for_ModForm}), then 
\begin{equation*}
{_{4}F_{3}} {\biggl( \begin{array}{cccc} \chi_5, & \chi_5^2, & \chi_5^3, & \chi_5^4 \\
\phantom{\chi_5} & \varepsilon, & \varepsilon, & \varepsilon \end{array}
\Big| \; 1 \biggr)}_{p}
- p
= c(p).
\end{equation*}
\end{theorem}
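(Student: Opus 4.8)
The plan is to interpret the left-hand side geometrically, as the interesting part of a point count on a threefold, and then to identify the relevant piece of cohomology with the Galois representation attached to $f$. First I would unwind Definition~\ref{def_F} at $x=1$ with top parameters $\chi_5,\chi_5^2,\chi_5^3,\chi_5^4$ and trivial bottom parameters, writing the value as a single sum over $\chi\in\widehat{\mathbb{F}_p^{*}}$ of products of Gauss sums. Using the point-counting formula (\ref{for_CtgPts}) together with (\ref{for_AddtoGauss}) and (\ref{for_AddProp}), I would match this character sum to the number of $\mathbb{F}_p$-points on a suitable diagonal (Fermat-type) threefold $X/\mathbb{F}_p$ whose nontrivial cohomology is governed by characters of order $5$ — this is exactly why the congruence $p\equiv 1 \pmod 5$ is forced, since $\chi_5$ exists only for such $p$. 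The Hasse--Davenport relation (Theorem~\ref{thm_HD}) and the conjugation formula (\ref{for_GaussConj}) are the tools that collapse the order-$5$ Gauss-sum products into the Jacobi sums that appear in the zeta function of $X$, while the Gross--Koblitz formula (Theorem~\ref{thm_GrossKoblitz}) keeps all normalizations consistent with Definition~\ref{def_F}.

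The second step is to read off the cohomological decomposition. The terms of $\#X(\mathbb{F}_p)$ indexed by the four nontrivial order-$5$ characters assemble into $\operatorname{tr}(\mathrm{Frob}_p\mid V)$, where $V$ is the two-dimensional Galois representation attached to $f$, of weight $3$ (Frobenius eigenvalues of absolute value $p^{3/2}$); meanwhile a single weight-$2$ Tate-type eigenvalue contributes exactly $p$, and the trivial-character term together with the affine-to-projective corrections contribute only lower-order constants that are absorbed into the normalization. Thus the displayed ${_4F_3}(\cdots\mid 1)_p$ equals $\operatorname{tr}(\mathrm{Frob}_p\mid V)+p$, which already explains the shape of the identity: subtracting $p$ isolates $\operatorname{tr}(\mathrm{Frob}_p\mid V)$. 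It then remains to prove that $V$ is modular, attached precisely to the weight-$4$ level-$25$ newform $f$ of (\ref{for_ModForm}), so that $\operatorname{tr}(\mathrm{Frob}_p\mid V)=c(p)$.

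The hard part will be this last identification. I would first compute the Jacobi-sum traces explicitly for the smallest primes $p\equiv 1 \pmod 5$ and check agreement with the Fourier coefficients $c(p)$ obtained from the eta-quotient expansion in (\ref{for_ModForm}); this pins down $f$ among the finitely many newforms in $S_4(\Gamma_0(25))$. To upgrade the numerical match to an equality of Galois representations I would use a Faltings--Serre / Livn\'e style argument: the residual representations have small image, so a finite, effectively bounded set of primes suffices to force $V\cong\rho_f$. A cleaner alternative — and the route implicit in the reference — is to invoke the general hypergeometric-to-modular machinery of \cite{McC5} for the order-$5$ family and specialise at the parameter corresponding to $x=1$, so that the statement reduces to recognising the specialised fibre as the threefold whose modularity is recorded by $f$. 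Finally, Lemma~\ref{lem_G_to_F} shows that the same identity, rephrased through the $p$-adic function ${_{n}G_{n}}[\cdots]$, persists for every prime not dividing $5$, which is the sense in which this result motivates the present paper.
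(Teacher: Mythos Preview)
The present paper does not prove this statement: it is quoted verbatim from \cite[Corollary~1.6]{McC5} and appears in Section~\ref{sec_Properties} purely as an illustration of why one wants the function ${_{n}G_{n}}[\cdots]$. There is therefore no proof here to compare your proposal against.

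That said, your outline is a plausible sketch of the argument that underlies results of this type: unwind Definition~\ref{def_F} at $x=1$ as a Gauss-sum expression, interpret it via (\ref{for_CtgPts}) as the interesting part of a point count on a diagonal threefold with order-$5$ symmetry, and match the resulting two-dimensional weight-$3$ piece of $\ell$-adic cohomology to the Galois representation attached to the weight-$4$ newform $f$ of (\ref{for_ModForm}). The displacement by $p$ does typically arise from a Tate-type class, and a Livn\'e/Faltings--Serre argument is one standard way to nail down the identification with $f$. In the actual source \cite{McC5} the route is somewhat more concrete, working through explicit Gauss/Jacobi-sum evaluations on the specific variety rather than an abstract modularity comparison, but the overall shape is the same.

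One genuine overstatement in your last paragraph: Lemma~\ref{lem_G_to_F} does \emph{not} by itself extend the identity to all primes $p\neq 5$. It only says that for $p\equiv 1\pmod 5$ the finite-field ${_{4}F_{3}}$ and the $p$-adic ${_{4}G_{4}}$ coincide. The extension to all odd primes $p\neq 5$ is the content of Theorem~\ref{cor_GStoMod}, which is a separate result (also quoted from \cite{McC5}) and requires its own argument; Lemma~\ref{lem_G_to_F} merely shows the two statements are compatible on the overlap.
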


\noindent This result can be extended to almost all primes using ${_{n}G_{n}}[\cdots]$, as follows.
\begin{theorem}\cite[Theorem 1.4]{McC5}\label{cor_GStoMod}
If $p \neq 5$ is an odd prime is as defined in (\ref{for_ModForm}), then 
\begin{align*}
{_{4}G_{4}}
\biggl[ \begin{array}{cccc} \frac{1}{5}, & \frac{2}{5}, & \frac{3}{5}, & \frac{4}{5} \\[2pt]
 0, & 0, & 0, & 0 \end{array}
\Big| \; 1 \; \biggr]_p
-\bigl( \tfrac{5}{p} \bigr) \hspace{1pt} p
&=
c(p),
\end{align*}
where $\bigl( \tfrac{\cdot}{p} \bigr)$ is the Legendre symbol modulo $p$.
\end{theorem}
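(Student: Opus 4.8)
The plan is to separate primes according to their residue modulo $5$. If $p\equiv 1\pmod 5$ the statement is essentially a reformulation of the finite field identity of Theorem~\ref{Cor_GHStoMod}: then $\bigl(\tfrac5p\bigr)=1$ and $\chi_5:=\bar\omega^{\frac{p-1}{5}}$ is a character of order $5$, so $\chi_5^{i}=\bar\omega^{\frac{i(p-1)}{5}}$ for $i=1,2,3,4$, and Lemma~\ref{lem_G_to_F} — applied with $n+1=4$, $A_{i-1}=\chi_5^{i}$, every $B_j=\varepsilon$ (hence $b_j=0$), and $t=1$ — gives
\[
{_{4}F_{3}}{\biggl(\begin{array}{cccc}\chi_5,&\chi_5^2,&\chi_5^3,&\chi_5^4\\ \phantom{\chi_5}&\varepsilon,&\varepsilon,&\varepsilon\end{array}\Big|\;1\biggr)}_{p}
={_{4}G_{4}}\biggl[\begin{array}{cccc}\tfrac15,&\tfrac25,&\tfrac35,&\tfrac45\\[2pt] 0,&0,&0,&0\end{array}\Big|\;1\;\biggr]_p .
\]
Subtracting $p=\bigl(\tfrac5p\bigr)p$ and invoking Theorem~\ref{Cor_GHStoMod} settles this case.

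Now let $p\ne 5$ be odd with $p\not\equiv 1\pmod 5$, so there is no character of order $5$ and the finite field hypergeometric function is not available. The idea is to realise $c(p)$ as the ``interesting'' part of $\#V(\mathbb{F}_p)$, where $V/\mathbb{Q}$ is the variety whose middle cohomology carries the Galois representation attached to $f$ — for instance a rigid Calabi--Yau threefold arising from the mirror quintic pencil at the parameter value corresponding to the argument $1$ — so that $c(p)=\#V(\mathbb{F}_p)-(\text{contribution of the remaining cohomology})$. One then evaluates $\#V(\mathbb{F}_p)$ with the point-counting formula~(\ref{for_CtgPts}): expand the additive character sums into Gauss sums by~(\ref{for_AddtoGauss}), simplify products of Gauss sums with~(\ref{for_GaussConj}), pass to the $p$-adic gamma function via the Gross--Koblitz formula (Theorem~\ref{thm_GrossKoblitz}), and collapse the remaining sum using the $p$-adic multiplication formula~(\ref{for_pGammaMult}) with $m=5$ (legitimate since $p\nmid 5$). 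Writing the summation variable as $\chi=\omega^{j}$, the powers of $\pi$ reassemble into the factors $(-p)^{-\lfloor\cdots\rfloor}$ of Definition~\ref{def_Gp}, the $\omega$-factors and signs into $\bar\omega^{j}(1)=1$, the product $\prod_{h=1}^{4}\Gamma_p\bigl(\tfrac h5\bigr)$ into the denominators $\Gamma_p(\langle a_i\rangle)$, and the contribution of the trivial character into precisely the correction term $-\bigl(\tfrac5p\bigr)p$; what remains is ${_{4}G_{4}}\bigl[\tfrac15,\tfrac25,\tfrac35,\tfrac45;0,0,0,0\,\big|\,1\bigr]_p$. Since this computation is valid for every odd $p\ne 5$, the identity then holds in general.

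I expect the second case to be where the real difficulty lies, on two levels. The first is the $p$-adic bookkeeping — matching the floor-function exponents of Definition~\ref{def_Gp} exactly, and correctly isolating the degenerate ($\chi=\varepsilon$) contributions that account for $-\bigl(\tfrac5p\bigr)p$ — where~(\ref{for_pGammaMult}) plays uniformly in $p$ the role that the Hasse--Davenport relation (Theorem~\ref{thm_HD}) plays when $p\equiv 1\pmod 5$. The second, more serious, is the modularity input: one needs $c(p)$ to equal the relevant Frobenius trace for \emph{all} $p$, i.e.\ the (known) modularity of $V$, and not merely the identity on the density-$\tfrac14$ set $p\equiv 1\pmod 5$ supplied by Theorem~\ref{Cor_GHStoMod} — the latter nonetheless serving as a useful consistency check. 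Finally one should confirm that the two sides agree on the nose rather than up to sign.
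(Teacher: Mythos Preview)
The paper does not prove this statement at all: it is quoted verbatim as \cite[Theorem 1.4]{McC5} and is included in Section~\ref{sec_Properties} purely as an illustration of how ${_{n}G_{n}}[\cdots]_p$ can extend finite-field hypergeometric results beyond the congruence class on which the original ${_{n+1}F_n}$ identity (here Theorem~\ref{Cor_GHStoMod}) is defined. There is therefore no proof in the present paper against which to compare your attempt.

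That said, your outline is broadly the right shape for the argument in \cite{McC5}. Your reduction of the case $p\equiv 1\pmod 5$ to Theorem~\ref{Cor_GHStoMod} via Lemma~\ref{lem_G_to_F} is exactly the intended mechanism (and is the easy direction). For the remaining residues you correctly anticipate both the method and the two genuine inputs: a uniform-in-$p$ Gauss-sum/$p$-adic-gamma computation in the spirit of the proof of Theorem~\ref{thm_trace} (with~(\ref{for_pGammaMult}) replacing the role that Hasse--Davenport would play when a character of order $5$ exists), and a modularity statement linking $c(p)$ to the Frobenius trace on the relevant threefold for \emph{all} odd $p\neq 5$. Your sketch is honest about where the work lies; just be aware that the present paper supplies neither of those ingredients and simply imports the finished result from \cite{McC5}.
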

Results in \cite{M} establish congruences modulo $p^2$ between the classical hypergeometric series and the hypergeometric function over $\mathbb{F}_p$, for primes $p$ in certain congruence classes. 
In \cite{McC4} we extend these results to primes in additional congruence classes and,  in some cases to modulo $p^3$, using the predecessor to ${_{n}G_{n}}[\cdots]$.  

The main purpose of this paper is to extend to almost all primes the results in  \cite{L}, which relate the trace of Frobenius $a_p$ to a special value of a hypergeometric function over $\mathbb{F}_p$ when $p \equiv 1 \pmod {12}$.
In addition to their formal statement, the results in \cite{L} appear in various forms throughout that paper, all of of which are related by known transformations for hypergeometric function over finite fields.
We recall one such version of \cite[Theorem 2.1]{L}.
 \begin{theorem}[Lennon \cite{L} \S 2.2]\label{thm_Lennon}
Let $p \equiv 1 \pmod{12}$ be prime and let $\psi \in \widehat{\mathbb{F}^{*}_{p}}$ be a character of order 12. Consider an elliptic curve $E/\mathbb{F}_p$ of the form $E:y^2 = x^3+ax+b$ with $j(E)\neq 0, 1728$. Then
\begin{equation*}
a_p(E) = 
\psi^3 \left (-\frac{a^3}{27}\right) \cdot 
{_{2}F_{1}} {\biggl( \begin{array}{cc} \psi, & \psi^5 \\
 \phantom{\psi,} & \varepsilon \end{array}
\Big| \; \frac{4a^3+27b^2}{4a^3} \biggr)}_{p}.
\end{equation*}
\end{theorem}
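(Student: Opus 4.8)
The plan is to compute $a_p(E)$ from a character-sum point count, reduce it to a single sum of Gauss sums valid for every prime $p>3$, and then --- using $p\equiv1\pmod{12}$, which is precisely what makes characters of order $2$, $3$, $4$ and $12$ available --- recognise that sum as a value of a hypergeometric function over $\mathbb{F}_p$ that the transformation dictionary of \cite{L} puts into the stated form.

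First I would use $\#E(\mathbb{F}_p)=p+1+\sum_{x\in\mathbb{F}_p}\phi(x^3+ax+b)$, so that $a_p(E)=-\sum_{x}\phi(x^3+ax+b)$, pass to additive characters (equivalently, apply (\ref{for_CtgPts}) to $w^2-x^3-ax-b$), and carry out the Gaussian sum $\sum_{w}\theta(yw^2)=\phi(y)\,g(\phi)$ over $w$, leaving
$$a_p(E)=-\frac{g(\phi)}{p}\sum_{y\in\mathbb{F}_p^{*}}\phi(y)\,\theta(-by)\sum_{x\in\mathbb{F}_p}\theta\!\bigl(-y(x^3+ax)\bigr).$$
For the cubic sum I would isolate the $x=0$ term and, for $x\neq0$, expand both $\theta(-yx^3)$ and $\theta(-yax)$ by (\ref{for_AddtoGauss}); summing over $x\neq0$ collapses the resulting double character sum (it forces the two summation characters $\chi,\rho$ to satisfy $\rho=\bar\chi^{3}$), giving
$$\sum_{x\in\mathbb{F}_p}\theta\!\bigl(-y(x^3+ax)\bigr)=1+\frac{1}{p-1}\sum_{\chi\in\widehat{\mathbb{F}_p^{*}}}g(\bar\chi)\,g(\chi^3)\,\bar\chi^3(a)\,\bar\chi^2(-y).$$
Substituting back, the contribution of the ``$1$'' collapses via $g(\phi)^2=\phi(-1)p$ to the elementary term $-\phi(b)$, while the remaining sum over $y$ introduces $g(\phi\bar\chi^2)$; after simplifying the character factors (the factors of $-1$ cancel because $\chi^4(-1)=1$) one obtains the closed form
$$a_p(E)=-\phi(b)-\frac{\phi(-b)\,g(\phi)}{p(p-1)}\sum_{\chi\in\widehat{\mathbb{F}_p^{*}}}g(\bar\chi)\,g(\chi^3)\,g(\phi\bar\chi^2)\,\chi\!\left(\tfrac{b^2}{a^3}\right),$$
which holds for all $p>3$ (note $b\neq0$ since $j(E)\neq1728$).

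Now I would bring in $p\equiv1\pmod{12}$: fix $\psi$ of order $12$ and set $\eta=\psi^4$ (order $3$), $\nu=\psi^3$ (order $4$), $\phi=\psi^6$. Applying the Hasse--Davenport relation (Theorem \ref{thm_HD}) with $m=3$ to $g(\chi^3)$ and with $m=2$ to $g(\phi\bar\chi^2)=g\bigl((\nu\bar\chi)^2\bigr)$, and using (\ref{for_GaussConj}) in the form $g(\bar\chi)g(\chi)=\chi(-1)p$ (valid for $\chi\neq\varepsilon$), the generic term becomes a constant multiple of $\chi\!\left(-\tfrac{27b^2}{4a^3}\right)g(\eta\chi)g(\eta^2\chi)g(\nu\bar\chi)g(\nu^3\bar\chi)$ --- the factors $\chi(27)$ and $\chi(\tfrac14)$ coming from the $\psi^{-m}(m)$ terms in Hasse--Davenport and the sign $\chi(-1)$ from $g(\bar\chi)g(\chi)$. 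Inserting a further pair $g(\chi)g(\bar\chi)=\chi(-1)p$ and reindexing, this is (up to constants) a hypergeometric function over $\mathbb{F}_p$ in the sense of Definition \ref{def_F} with argument $-\tfrac{27b^2}{4a^3}$ --- essentially ${_{3}F_{2}}(\varepsilon,\eta,\eta^2;\bar\nu,\bar\nu^3\,|\,{-}\tfrac{27b^2}{4a^3})$. The reduction and transformation formulas for hypergeometric functions over $\mathbb{F}_p$ (exactly those used throughout \cite{L} to pass between the various forms of the trace formula, including the one relating arguments $z$ and $1-z$) then rewrite this as $\psi^3(-a^3/27)\cdot{_{2}F_{1}}(\psi,\psi^5;\varepsilon\,|\,\tfrac{4a^3+27b^2}{4a^3})$, at which point one also verifies that the accumulated constants, the ``$-\phi(b)$'' term, and the degenerate-character contributions balance.

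The step I expect to be the main obstacle is precisely this last identification: marshalling the Hasse--Davenport rearrangements and the hypergeometric reductions so as to land on exactly the parameters $\psi,\psi^5,\varepsilon$, the argument $\tfrac{4a^3+27b^2}{4a^3}$, and the prefactor $\psi^3(-a^3/27)$, while at the same time handling the degenerate characters --- those with $\chi=\varepsilon$, with $\chi^3=\varepsilon$ (i.e. $\chi\in\{\varepsilon,\eta,\eta^2\}$), or with $\chi^2=\phi$ (i.e. $\chi\in\{\nu,\nu^3\}$), where some Gauss sums collapse to $-1$ --- so that the identity holds on the nose. It is here that $p\equiv1\pmod{12}$ is genuinely used: without it the characters $\eta,\nu,\psi$ do not exist, and neither the Hasse--Davenport splittings nor the function ${_{2}F_{1}}(\psi,\psi^5;\varepsilon\,|\,\cdot)$ make sense --- which is exactly the restriction that the $p$-adic function ${_{n}G_{n}}[\cdots]$, introduced earlier, is designed to circumvent. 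Everything else --- the point count, the collapse of the double character sum, and the elementary simplifications --- is routine.
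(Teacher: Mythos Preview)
The paper does not prove this statement: Theorem~\ref{thm_Lennon} is quoted from Lennon \cite[\S2.2]{L} as motivation, and no proof is given or claimed. So there is nothing in the paper to compare your attempt against directly.

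That said, your initial computation tracks exactly the first half of the paper's proof of its own Theorem~\ref{thm_trace}, which the paper explicitly says ``proceeds along similar lines to the proofs of \cite[Thm~1.2]{F2} and \cite[Thm~2.1]{L}.'' Your intermediate identity
\[
a_p(E)=-\phi(b)-\frac{\phi(-b)\,g(\phi)}{p(p-1)}\sum_{\chi}g(\bar\chi)\,g(\chi^3)\,g(\phi\bar\chi^2)\,\chi\!\left(\tfrac{b^2}{a^3}\right)
\]
is equivalent to the paper's (\ref{for_ap1}) once the $\chi=\varepsilon$ term is absorbed into the constant. Where the paper then applies a single Hasse--Davenport step (\ref{for_HD1}) and passes to $p$-adic gamma functions via Gross--Koblitz, you instead apply Hasse--Davenport with both $m=2$ and $m=3$ to stay in the finite-field setting; that is a sensible divergence given your target.

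The genuine gap is your endgame. You reach ``essentially ${_{3}F_{2}}(\varepsilon,\eta,\eta^2;\bar\nu,\bar\nu^3\,|\,{-}\tfrac{27b^2}{4a^3})$'' and then defer to ``the reduction and transformation formulas \dots\ used throughout \cite{L}'' to obtain ${_2F_1}(\psi,\psi^5;\varepsilon\,|\,\tfrac{4a^3+27b^2}{4a^3})$ with the stated prefactor. But the paper itself remarks that the various forms in \cite{L} are interrelated by exactly these transformations; invoking them here means you are citing Lennon's argument to prove Lennon's theorem. To make this an independent proof you must either carry out the ${_3F_2}\to{_2F_1}$ reduction and the $z\leftrightarrow 1-z$ argument shift explicitly (and simultaneously account for the degenerate characters $\chi\in\{\varepsilon,\eta,\eta^2,\nu,\nu^3\}$, where your use of $g(\chi)g(\bar\chi)=\chi(-1)p$ already fails at $\chi=\varepsilon$), or organise the Hasse--Davenport splittings so that a ${_2F_1}$ appears directly without the ${_3F_2}$ detour.
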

\noindent Theorem \ref{thm_Lennon} generalizes \cite[Theorem 1.2]{F2} and other results from Fuselier's thesis \cite{F} which provide similar results for various families of elliptic curves.
In attempting to extend Theorem \ref{thm_Lennon} beyond $p \equiv 1 \pmod{12}$  one might consider using  
${_{2}G_{2}}
\biggl[ \begin{array}{cc} \frac{1}{12},  & \frac{5}{12}\vspace{.05in}\\[2pt]
 0,  & 0 \end{array}
\Big| \; \displaystyle\frac{4a^3}{4a^3+27b^2} \; \biggr]_p,$
as suggested by Lemma \ref{lem_G_to_F}. However this leads to poor results when $p \not\equiv 1 \pmod{12}$.
Results where ${_{n}G_{n}}[\cdots]$ extend those involving ${_{n+1}F_{n}}(\cdots)$, seem to work best when the arguments of  ${_{n}G_{n}}[\cdots]$ appear in sets such that for each denominator all possible relatively prime numerators are represented.    
This is reflected in Theorem \ref{thm_trace}.

Hypergeometric functions over finite fields have been applied to many areas but most interestingly perhaps has been their relationships to modular forms \cite{AO, E, F2, FOP, McC5, M, O, P} and their use in evaluating the number of points over $\mathbb{F}_{p}$ on certain algebraic varieties \cite{AO, F2, McC5, V}.
Lemma \ref{lem_G_to_F} allows these results to be expressed in terms of ${_{n}G_{n}}[\cdots]$ also.
Many of these cited results are based on ${_{n+1}F_{n}}(\cdots)$ with arguments which are characters of order $\leq 2$ and hold for all odd primes.
However there is much scope for developing results where the characters involved have higher orders, in which case, these functions will be defined for primes in certain congruence classes and ${_{n}G_{n}}[\cdots]$ allows the possibility to extend these results to a wider class of primes.


\section{Proofs of Theorem \ref{thm_trace} and Corollary \ref{cor_trace}}\label{sec_Proofs}

We first prove a preliminary result which we will require later for the proof of our main result.
\begin{lemma}\label{lem_pGamma}
Let $p$ be prime. For $0 \leq j \leq p-2$ and $t \in \mathbb{Z}^{+}$ with $p \nmid t$,
\begin{equation}\label{lem_gammaptj}
\gfp{\Big\langle {\tfrac{tj}{p-1}} \Big\rangle}\; {\omega(t^{tj}) \displaystyle\prod_{h=1}^{t-1} \gfp{\tfrac{h}{t}}} = \displaystyle\prod_{h=0}^{t-1} \gfp{\Big\langle \tfrac{h}{t} + \tfrac{j}{p-1} \Big\rangle}
\end{equation}
and
\begin{equation}\label{lem_gammamtj}
\gfp{\Big\langle {\tfrac{-tj}{p-1}} \Big\rangle}\; {\omega(t^{-tj}) \displaystyle\prod_{h=1}^{t-1} \gfp{\tfrac{h}{t}}} = \displaystyle\prod_{h=0}^{t-1} \gfp{\Big\langle \tfrac{1+h}{t} - \tfrac{j}{p-1} \Big\rangle}.
\end{equation}
\end{lemma}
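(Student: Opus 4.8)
The plan is to derive both identities from the $p$-adic Gauss multiplication formula \eqref{for_pGammaMult}, which is exactly a statement about products $\prod_{h=0}^{m-1}\gfp{\frac{x+h}{m}}$. The key observation is that \eqref{for_pGammaMult} is stated for arguments of the shape $\frac{x+h}{m}$ with $x=\frac{r}{p-1}$, $0\le r\le p-1$, whereas the right-hand sides of \eqref{lem_gammaptj} and \eqref{lem_gammamtj} involve fractional parts $\langle \frac{h}{t}+\frac{j}{p-1}\rangle$ and $\langle\frac{1+h}{t}-\frac{j}{p-1}\rangle$. So the first task is bookkeeping: to show that, as $h$ ranges over $0,1,\dots,t-1$, the quantities $\langle \frac{h}{t}+\frac{j}{p-1}\rangle$ are precisely the numbers $\frac{1}{t}\langle\frac{tj}{p-1}\rangle + \frac{h}{t}$ reduced appropriately — i.e.\ that $\prod_{h=0}^{t-1}\gfp{\langle\frac{h}{t}+\frac{j}{p-1}\rangle} = \prod_{h=0}^{t-1}\gfp{\frac{X+h}{t}}$ where $X := \langle \frac{tj}{p-1}\rangle$ (note $X$ has the form $\frac{r}{p-1}$ with $0\le r\le p-1$, since $p\nmid t$ forces $tj\not\equiv 0$ only when $j\neq 0$; the edge cases $j=0$ and the vanishing-numerator cases should be checked directly). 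Granting this reindexing, \eqref{lem_gammaptj} follows immediately by applying \eqref{for_pGammaMult} with $m=t$, $x=X$: the right side becomes $\omega\!\left(t^{(1-X)(1-p)}\right)\gfp{X}\prod_{h=1}^{t-1}\gfp{\frac{h}{t}}$, and one then has to match $\omega\!\left(t^{(1-X)(1-p)}\right)$ with $\omega(t^{tj})$.

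That character-matching is where I expect the only real friction. One needs $t^{(1-X)(1-p)} \equiv t^{tj}\pmod p$, equivalently $(1-X)(1-p) \equiv tj \pmod{p-1}$ (using that $\omega$ has order $p-1$ and $t\in\mathbb{Z}_p^*$). Since $X=\langle\frac{tj}{p-1}\rangle$ we have $(p-1)X \equiv tj \pmod{p-1}$... this needs care: write $(p-1)X = tj - (p-1)\lfloor\frac{tj}{p-1}\rfloor$, so $(1-X)(1-p) = (1-p) + (p-1)X = (1-p) + tj - (p-1)\lfloor\tfrac{tj}{p-1}\rfloor \equiv tj \pmod{p-1}$, since both $(1-p)$ and $(p-1)\lfloor\tfrac{tj}{p-1}\rfloor$ are divisible by $p-1$. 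So the exponent congruence holds and \eqref{lem_gammaptj} is done.

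For \eqref{lem_gammamtj} I would run the same argument with a sign flip: apply \eqref{for_pGammaMult} with $m=t$ and $x = \langle\frac{-tj}{p-1}\rangle$, and reindex using $h\mapsto t-1-h$ to convert $\prod_{h=0}^{t-1}\gfp{\frac{x+h}{t}}$ into a product over $\langle\frac{1+h}{t}-\frac{j}{p-1}\rangle$; the identity $\langle -y\rangle = 1 - \langle y\rangle$ for $y\notin\mathbb{Z}$ is the mechanism relating $\frac{x+h}{t}$ (with $x$ built from $-tj$) to $\frac{1+h}{t}-\frac{j}{p-1}$. The character factor $\omega\!\left(t^{(1-x)(1-p)}\right)$ matches $\omega(t^{-tj})$ by the same mod-$p-1$ computation with $tj$ replaced by $-tj$. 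Alternatively, \eqref{lem_gammamtj} can be deduced from \eqref{lem_gammaptj} by replacing $j$ with $p-1-j$ (which sends $\frac{j}{p-1}\mapsto 1-\frac{j}{p-1}$, hence $\langle\frac{tj}{p-1}\rangle \mapsto \langle\frac{-tj}{p-1}\rangle$) together with a relabelling $h\mapsto h-1$ inside the product and the reflection formula \eqref{for_pGammaOneMinus} to absorb any leftover $\gfp{x}\gfp{1-x}$ terms; I would present whichever of these two routes turns out cleaner, most likely the direct one. The main obstacle, to be explicit, is not any deep idea but ensuring the floor/fractional-part reindexing is airtight for every $j$ in the range $0\le j\le p-2$, including the degenerate cases where some $\langle\frac{h}{t}+\frac{j}{p-1}\rangle$ hits $0$.
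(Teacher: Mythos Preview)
Your proposal is correct and follows essentially the same route as the paper: apply the multiplication formula \eqref{for_pGammaMult} with $m=t$ and $x=\langle\frac{\pm tj}{p-1}\rangle$ (the paper writes this as $x=\frac{tj}{p-1}-k$ and $x=k-\frac{tj}{p-1}$ for the appropriate integer $k$), then reindex the product and match the Teichm\"uller factor via the mod-$(p-1)$ exponent computation you carried out. The only cosmetic difference is that the paper does the reindexing for \eqref{lem_gammamtj} directly by splitting the product at $h=t-k$ rather than via your substitution $h\mapsto t-1-h$ (which is not quite the right shift, though the correct one is equally easy); your alternative route of deducing \eqref{lem_gammamtj} from \eqref{lem_gammaptj} via $j\mapsto p-1-j$ also works and is arguably cleaner.
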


\begin{proof}
Fix  $0 \leq j \leq p-2$ and let $k \in \mathbb{Z}_{\geq 0}$ be defined such that 
\begin{equation}\label{cond_kptj}
k \left(\tfrac{p-1}{t} \right) \leq j < (k+1) \left(\tfrac{p-1}{t} \right).
\end{equation}

\noindent Letting $m=t$ and $x=\frac{tj}{p-1} - k$ in (\ref{for_pGammaMult}) yields
\begin{equation}\label{for_ptj0}
\displaystyle\prod_{h=0}^{t-1}\gfp{\tfrac{j}{p-1} + \tfrac{h-k}{t}}= \omega \left(t^{(1-\frac{tj}{p-1} + k)(1-p)}\right) \gfp{\tfrac{tj}{p-1} - k} \prod_{h=1}^{t-1} \gfp{\tfrac{h}{t}}.
\end{equation}

\noindent We note that $0 \leq k < t$. Using (\ref{cond_kptj}) we see that if $0 \leq h < t$ then $0 \leq \tfrac{h-k}{t}+ \tfrac{j}{p-1} < 1.$
Therefore, if $1 \leq k < t$ then
\begin{align}\label{for_ptj2}
\notag \displaystyle\prod_{h=0}^{t-1} \gfp{\tfrac{h-k}{t}+ \tfrac{j}{p-1}} &=
\displaystyle\prod_{h=0}^{t-1} \gfp{\Big\langle\tfrac{h-k}{t}+ \tfrac{j}{p-1}\Big\rangle} \\
\notag &=
\displaystyle\prod_{h=0}^{k-1} \gfp{\Big\langle \tfrac{t+h-k}{t}+ \tfrac{j}{p-1} \Big\rangle}
\displaystyle\prod_{h=k}^{t-1} \gfp{\Big\langle \tfrac{h-k}{t}+ \tfrac{j}{p-1} \Big\rangle}\\
\notag &=
\displaystyle\prod_{h=t-k}^{t-1} \gfp{\Big\langle \tfrac{h}{t}+ \tfrac{j}{p-1} \Big\rangle}
\displaystyle\prod_{h=0}^{t-k-1} \gfp{ \Big\langle \tfrac{h}{t}+ \tfrac{j}{p-1} \Big\rangle}\\
&=
\displaystyle\prod_{h=0}^{t-1} \gfp{\Big\langle\tfrac{h}{t}+ \tfrac{j}{p-1} \Big\rangle}.
\end{align}
The result in (\ref{for_ptj2}) also holds when $k=0$. Substituting (\ref{for_ptj2}) into (\ref{for_ptj0}) and noting that $\gfp{\Big\langle {\tfrac{tj}{p-1}} \Big\rangle} = \gfp{\tfrac{tj}{p-1} -k}$, by (\ref {cond_kptj}), yields (\ref{lem_gammaptj}).

We use a similar argument to prove (\ref{lem_gammamtj}). The result is trivial for $j=0$. Fix  $0 < j \leq p-2$ and let $k \in \mathbb{Z}^{+}$ be defined such that 
\begin{equation}\label{cond_kmtj}
(k-1) \left(\tfrac{p-1}{t} \right) < j \leq k \left(\tfrac{p-1}{t} \right).
\end{equation}

\noindent Letting $m=t$ and $x=k - \frac{tj}{p-1} $ in (\ref{for_pGammaMult}) yields
\begin{equation}\label{for_mtj0}
\displaystyle\prod_{h=0}^{t-1} \gfp{\tfrac{k+h}{t} - \tfrac{tj}{p-1}} = \omega \left(t^{(1-k +\frac{tj}{p-1})(1-p)}\right) \gfp{k - \tfrac{tj}{p-1} } \prod_{h=1}^{t-1} \gfp{\tfrac{h}{t}}.
\end{equation}

\noindent We note that $1 \leq k \leq t$. Using (\ref{cond_kmtj}) we see that if $0 \leq h < t$ then $0 \leq \tfrac{k+h}{t}- \tfrac{j}{p-1} < 1$. 
Therefore, if  $1 < k \leq t$ then
\begin{align}\label{for_mtj2}
\notag \displaystyle\prod_{h=0}^{t-1} \gfp{\tfrac{k+h}{t} - \tfrac{j}{p-1}} &=
\displaystyle\prod_{h=0}^{t-1} \gfp{\Big\langle \tfrac{k+h}{t} - \tfrac{j}{p-1} \Big\rangle}\\
\notag &=
\displaystyle\prod_{h=0}^{t-k} \gfp{\Big\langle \tfrac{k+h}{t} - \tfrac{j}{p-1} \Big\rangle}
\displaystyle\prod_{h=t-k+1}^{t-1} \gfp{\Big\langle \tfrac{k+h-t}{t} - \tfrac{j}{p-1} \Big\rangle}\\
\notag &=
\displaystyle\prod_{h=k-1}^{t-1} \gfp{\Big\langle \tfrac{1+h}{t}- \tfrac{j}{p-1} \Big\rangle}
\displaystyle\prod_{h=0}^{k-2} \gfp{\Big\langle \tfrac{1+h}{t}- \tfrac{j}{p-1}\Big\rangle}\\
&=
\displaystyle\prod_{h=0}^{t-1} \gfp{\Big\langle\tfrac{1+h}{t}- \tfrac{j}{p-1} \Big\rangle}.
\end{align}
The result in (\ref{for_mtj2}) also holds when $k=1$. Substituting (\ref{for_mtj2}) into (\ref{for_mtj0}) and noting that $\gfp{\Big\langle {\tfrac{-tj}{p-1}} \Big\rangle} = \gfp{\tfrac{-tj}{p-1} +k}$, by (\ref{cond_kmtj}), yields (\ref{lem_gammamtj}).
\end{proof}


\begin{proof}[Proof of Theorem \ref{thm_trace}]
We note from the outset that  $a\neq 0, b\neq 0$ and $-\frac{27b^2}{4a^3}\neq1$ as $j(E) \neq 0,1728$. 
Initially the proof proceeds along similar lines to the proofs of \cite[Thm 1.2]{F2} and  \cite[Thm 2.1]{L} by using (\ref{for_CtgPts}) to evaluate $\# E(\mathbb{F}_p)$. 
However we then transfer to the $p$-adic setting using the Gross-Koblitz formula (Theorem \ref{thm_GrossKoblitz}) and use properties of the $p$-adic gamma function, including Lemma \ref{lem_pGamma}, to prove the desired result.
By (\ref{for_CtgPts}) we have that
\begin{align}\label{for_Np1}
\notag 
p ( \# E(\mathbb{F}_p) -1)
\notag 
&= p^2 +\sum_{y \in \mathbb{F}_p^*} \sum_{x_1, x_2 \in \mathbb{F}_p}
\theta(y \: (x_1^3 +ax_1+b-x_2^2))\\
\notag 
&= p^2 + \sum_{y \in \mathbb{F}_p^*} \theta(y b) + \sum_{y,  x_2 \in \mathbb{F}_p^*} \theta(yb- yx_2^2 ) + \sum_{y,  x_1 \in \mathbb{F}_p^*} \theta(y x_1^3+ayx_1+yb )\\
& \phantom{= p^2 \;} + \sum_{y, x_1, x_2 \in \mathbb{F}_p^*}\theta(yx_1^3 +ayx_1+by-yx_2^2)).
\end{align}
We now examine each sum of (\ref{for_Np1}) in turn and will refer to them as $S_1$ to $S_4$ respectively. 
Using (\ref{sum_AddChar}) we see that
\begin{equation*}
S_1 = \sum_{y \in \mathbb{F}_p^*} \theta(y b)  = -1.
\end{equation*}
We use (\ref{for_AddProp}) and (\ref{for_AddtoGauss}) to expand the remaining terms as expressions in Gauss sums. This exercise has also been carried out in the proof of \cite[Thm 2.1]{L} so we only give a brief account here.
Let $T$ be a fixed generator for the group of characters of $\mathbb{F}_p^*$. Then
\begin{align*}
S_2 &= \sum_{y,  x_2 \in \mathbb{F}_p^*} \theta(yb- yx_2^2 )  \\
&= \frac{1}{(p-1)^2}\sum_{r,s = 0}^{p-2} g(T^{-r}) \; g(T^{-s}) \; T^r(b)\; T^s(-1) \sum_{x_2 \in \mathbb{F}_p^*} T^{2s}(x_2) \sum_{y \in \mathbb{F}_p^*} T^{r+s}(y).
\end{align*}
We now apply (\ref{for_TOrthEl}) to the last summation on the right, which yields $(p-1)$ if $r=-s$ and zero otherwise. So
\begin{align*}
S_2 
&= \frac{1}{(p-1)}\sum_{s = 0}^{p-2}  g(T^{s}) \; g(T^{-s}) \; T^{-s}(b) \; T^s(-1) \sum_{x_2 \in \mathbb{F}_p^*} T^{2s}(x_2).
\end{align*}
Again we apply (\ref{for_TOrthEl}) to the last summation on the right, which yields $(p-1)$ if $s=0$ or $s=\tfrac{p-1}{2}$, and zero otherwise.
Thus, and using (\ref{for_GaussConj}), we get that
\begin{align*}
S_2 
&=  g(\varepsilon) \; g(\varepsilon) + \; g(\phi) \; g(\phi) \; \phi(-b) = 1 +p \, \phi(b).
\end{align*}
Similarly,
\begin{align*}
S_3 &= \sum_{y,  x_1 \in \mathbb{F}_p^*} \theta(y x_1^3+ayx_1+yb ) \\
&= \frac{1}{(p-1)^3}\sum_{r,s,t = 0}^{p-2}g(T^{-r}) \; g(T^{-s}) \; g(T^{-t})\; T^s(a)\; T^t(b) \\
& \qquad \qquad \qquad \qquad \qquad \qquad \qquad  \qquad \cdot 
\sum_{x_1 \in \mathbb{F}_p^*}T^{3r+s}(x_1) \sum_{y \in \mathbb{F}_p^*} T^{r+s+t}(y),
\end{align*}
and
\begin{align*}
S_4 &= \sum_{y, x_1, x_2 \in \mathbb{F}_p^*}\theta(yx_1^3 +ayx_1+by-yx_2^2))\\
&= \frac{1}{(p-1)^4}\sum_{j,r,s,t = 0}^{p-2}g(T^{-j}) \; g(T^{-r}) \; g(T^{-s}) \; g(T^{-t})\; T^r(a) \; T^s(b) \; T^t(-1) \\
& \qquad \qquad \qquad \qquad \qquad \qquad
\cdot \sum_{x_1 \in \mathbb{F}_p^*}T^{3j+r}(x_1) \sum_{y \in \mathbb{F}_p^*} T^{j+r+s+t}(y) \sum_{x_2 \in \mathbb{F}_p^*}T^{2t}(x_2) .
\end{align*}
We now apply (\ref{for_TOrthEl}) to the last summation on the right of $S_4$, which yields $(p-1)$ if $t=0$ or $t=\frac{p-1}{2}$ and zero otherwise.
In the case $t=0$ we find that
\begin{align*}
S_{4,t=0} 
&=-S_3.
\end{align*}
When $t=\frac{p-1}{2}$ we get, after applying  (\ref{for_TOrthEl}) twice more, 
 \begin{equation*}
\notag S_{4,t=\frac{p-1}{2}} 
= \frac{\phi(-b)}{(p-1)}\sum_{j = 0}^{p-2}g(T^{-j}) \; g(T^{\frac{p-1}{2}-2j}) \; g(T^{3j}) \; g(T^{\frac{p-1}{2}})\; T^{-3j}(a) \; T^{2j}(b)  .
\end{equation*}

\noindent Combining (\ref{def_ap}), (\ref{for_Np1}) and the evaluations of $S_1, S_2, S_3$ and $S_4$ we find that
\begin{equation}\label{for_ap1}
a_p(E)=  -\frac{\phi(b)\, p}{(p-1)}  -  \frac{\phi(-b)}{p(p-1)}\sum_{j = 1}^{p-2}g(T^{-j}) \; g(T^{\frac{p-1}{2}-2j}) \; g(T^{3j}) \; g(T^{\frac{p-1}{2}})\; T^{j}(\tfrac{b^2}{a^3})  .
\end{equation}
We know from Theorem \ref{thm_HD} with $\chi=\phi=T^{\frac{p-1}{2}}$ and $\psi=T^{-2j}$ that
\begin{equation}\label{for_HD1}
 g(T^{\frac{p-1}{2}-2j}) = \frac{g(T^{-4j}) \; g(T^{\frac{p-1}{2}}) \; T^{4j}(2)}{g(T^{-2j})}.
\end{equation}
Accounting for (\ref{for_HD1}) in (\ref{for_ap1}) and applying (\ref{for_GaussConj}) with $\chi=\phi=T^{\frac{p-1}{2}}$ gives us
\begin{equation}\label{for_ap2}
a_p(E)=  \frac{-\phi(b)\, p}{(p-1)} \left[ 1 + \frac{1}{p}\sum_{j = 1}^{p-2} \frac{g(T^{-j}) \; g(T^{3j}) \; g(T^{-4j})}{g(T^{-2j})}\; T^{j}(\tfrac{16b^2}{a^3})  \right].
\end{equation}
We now take $T$ to be the inverse of the Teichm\"{u}ller character, i.e., $T= \bar{\omega}$, and 
use the Gross-Koblitz formula (Theorem \ref{thm_GrossKoblitz}) to convert (\ref{for_ap2}) to an expression involving the $p$-adic gamma function. This yields
\begin{multline}\label{for_ap3}
a_p(E)=  \frac{-\phi(b)\, p}{(p-1)} \left[ 1 - \sum_{j = 1}^{p-2}
(-p)^{\left(  \lfloor{\frac{-2j}{p-1}}\rfloor  -  \lfloor{\frac{-j}{p-1}}\rfloor -  \lfloor{\frac{3j}{p-1}}\rfloor -  \lfloor{\frac{-4j}{p-1}}\rfloor  -1 \right)}
\right. \\ \left. \cdot
\frac{\biggfp{ \langle \frac{-j}{p-1}\rangle}\biggfp{\langle \frac{3j}{p-1}\rangle} \biggfp{\langle \frac{-4j}{p-1}\rangle}}{\biggfp{\langle \frac{-2j}{p-1}\rangle}}
\bar{\omega}^j(\tfrac{16b^2}{a^3}) 
\right].
\end{multline}
Next we use Lemma \ref{lem_pGamma} to transform the components of (\ref{for_ap3}) which involve the $p$-adic gamma function.
After some tidying up we then get
\begin{multline*}\label{for_ap4}
a_p(E)=  \frac{-\phi(b)\, p}{(p-1)} \left[ 1 - \sum_{j = 1}^{p-2}
(-p)^{\left(  \lfloor{\frac{-2j}{p-1}}\rfloor  -  \lfloor{\frac{-j}{p-1}}\rfloor -  \lfloor{\frac{3j}{p-1}}\rfloor -  \lfloor{\frac{-4j}{p-1}}\rfloor  -1 \right)}\;
\biggfp{ 1- \tfrac{j}{p-1}}
\biggfp{ \tfrac{j}{p-1}}
\right. \\ \cdot \left.
\frac{
\biggfp{\langle \frac{1}{4}- \frac{j}{p-1}\rangle}
\biggfp{\langle \frac{3}{4}- \frac{j}{p-1}\rangle}
\biggfp{\langle \frac{1}{3}+ \frac{j}{p-1}\rangle}
\biggfp{\langle \frac{2}{3}+ \frac{j}{p-1}\rangle}
}
{
\biggfp{\frac{1}{4}}
\biggfp{\frac{3}{4}}
\biggfp{\frac{1}{3}}
\biggfp{\frac{2}{3}}
}
\; \bar{\omega}^j(\tfrac{27b^2}{4a^3}) 
\right].
\end{multline*}
We note for $0 \leq j \leq p-2$ that 
$$ \lfloor{\tfrac{-4j}{p-1}}\rfloor - \lfloor{\tfrac{-2j}{p-1}}\rfloor =  \lfloor{\tfrac{1}{4} -\tfrac{j}{p-1}}\rfloor + \lfloor{\tfrac{3}{4}-\tfrac{j}{p-1}}\rfloor,$$
and when $1 \leq  j \leq p-2$ that 
$$\lfloor{\tfrac{-j}{p-1}}\rfloor + \lfloor{\tfrac{3j}{p-1}}\rfloor + 1 = \lfloor{\tfrac{1}{3} + \tfrac{j}{p-1}}\rfloor + \lfloor{\tfrac{2}{3}+\tfrac{j}{p-1}}\rfloor .$$
Also, by (\ref{for_pGammaOneMinus}) we have that, for  $0 \leq j \leq p-1$,
$$ \biggfp{ 1- \tfrac{j}{p-1}}\biggfp{ \tfrac{j}{p-1}} = (-1)^{p-j}= (-1)^p \; \bar{\omega}^j(-1).$$
Therefore
\begin{align*}
a_p(E)
&=  \frac{-\phi(b)\, p}{(p-1)} \left[  \sum_{j = 0}^{p-2}
(-p)^{\left( - \lfloor{\tfrac{1}{4} -\tfrac{j}{p-1}}\rfloor - \lfloor{\tfrac{3}{4}-\tfrac{j}{p-1}}\rfloor - \lfloor{\tfrac{1}{3} + \tfrac{j}{p-1}}\rfloor - \lfloor{\tfrac{2}{3}+\tfrac{j}{p-1}}\rfloor \right)}
\right. \\ &  \cdot \left.
\frac{\biggfp{\langle \frac{1}{4}- \frac{j}{p-1}\rangle}\biggfp{\langle \frac{3}{4}- \frac{j}{p-1}\rangle}}{\biggfp{\frac{1}{4}}\biggfp{\frac{3}{4}}}
\cdot \frac{\biggfp{\langle -\frac{2}{3}+ \frac{j}{p-1}\rangle}\biggfp{\langle -\frac{1}{3}+ \frac{j}{p-1}\rangle}}{\biggfp{\langle -\frac{2}{3}\rangle}\biggfp{\langle -\frac{1}{3}\rangle}}
\; \bar{\omega}^j(-\tfrac{27b^2}{4a^3}) 
\right]\\
&= 
\phi(b) \cdot p \cdot
{_{2}G_{2}}
\biggl[ \begin{array}{cc} \frac{1}{4},  & \frac{3}{4}\vspace{.05in}\\
 \frac{1}{3},  & \frac{2}{3} \end{array}
\Big| \; {-\frac{27b^2}{4a^3}} \; \biggr]_p.
\end{align*}
\end{proof}


\begin{remark}
Using (\ref{for_CtgPts}) to evaluate the number of  points on certain algebraic varieties over finite fields is by no means new. 
However, the author first observed the technique in the work of Fuselier \cite{F, F2} where it was used to relate these evaluations to hypergeometric functions over finite fields. 
These methods were subsequently used by Lennon \cite{L} in generalizing Fuselier’s work and, as we've seen, also form part of our proof of  Theorem \ref{thm_trace}. 
\end{remark}


\begin{proof}[Proof of Corollary \ref{cor_trace}]
As noted in the introduction, when $p>3$ then any elliptic curve $E/\mathbb{F}_p$ is isomorphic to an elliptic curve of the form $E^{\prime}:y^2 = x^3+ax+b$. Therefore $a_p(E)=a_p(E^{\prime})$ and Theorem \ref{thm_trace} can be used to evaluate $a_p(E)$.
We also note that $j(E)=j(E^{\prime}) = \frac{1728 \cdot 4a^3}{4a^3 + 27b^2}$ and so $ 1- \frac{1728}{j(E)}=-\frac{27b^2}{4a^3} $. As $E$ and $E^{\prime}$ are related by an admissible change of variables, this implies $c_6(E) = c_6(E^{\prime}) \cdot u^6$ for some $u \in \mathbb{F}_p^{*}$. 
Now $c_6(E^{\prime}) = -27\cdot 32\cdot b$ so $\phi(b) =\phi(-6\cdot c_6(E))$ as required. 
\end{proof}


\section{Concluding Remarks}\label{sec_cr}
\subsection{The $p=3$ Case}
Theorem \ref{thm_trace} considers elliptic curves over $\mathbb{F}_p$ for primes $p>3$. 
While ${_{n}G_{n}}[\cdots]_p$ is not defined for $p=2$ it is defined for $p=3$ once the parameters are $3$-adic integers.
As the parameters of the ${_{2}G_{2}}[\cdots]_p$ in Theorem \ref{thm_trace} are not all $3$-adic integers it is clear that the result cannot be extended to $p=3$ using the same function.
However we can say something about the $p=3$ case.
Any elliptic curve over $\mathbb{F}_3$, whose $j$-invariant is non-zero, is isomorphic to a curve of the form $E: y^2=x^3+ax^2+b$ with both $a$ and $b$ non-zero \cite[App. A]{Si}.
It is an easy exercise to evaluate $a_3(E)$ and to show that
\begin{equation*}
a_3(E) = \phi(a)  \cdot
{_{2}G_{2}}
\biggl[ \begin{array}{cc} 0,  & 0 \vspace{.05in}\\
0,  & \frac{1}{2} \end{array}
\Big| \; {-\frac{a}{b}} \; \biggr]_3. 
 \end{equation*}
This relationship is somewhat contrived however and direct calculation of $a_3(E)$ is much more straightforward.

\subsection{Transformation Properties of ${_{n}G_{n}}[\cdots]_p$}
As mentioned in Section \ref{sec_Properties}, hypergeometric functions over finite fields were originally defined by Greene \cite{G} as analogues of classical hypergeometric functions.
His motivation was to develop the area of character sums and their evaluations through parallels with the classical functions, and, in particular, with their transformation properties. 
His endeavor was largely successful and analogues of various classical transformations were found \cite{G}.
Some others were recently provided by the author in \cite{McC6}.
These transformations for hypergeometric functions over finite fields can obviously be re-written in terms of  ${_{n}G_{n}}[\cdots]_p$ via Lemma  \ref{lem_G_to_F} and these results will hold for all $p$ where the original characters existed over $\mathbb{F}_p$.
It is an interesting question to consider if these transformations can then be extended to almost all $p$ and become transformations for ${_{n}G_{n}}[\cdots]_p$ in full generality.
This is something yet to be considered and may be the subject of forthcoming work.

\subsection{$q$-Version of ${_{n}G_{n}}[\cdots]_p$}
As discussed in Section \ref{sec_Properties}, ${_{n}G_{n}}[\cdots]_p$ extends hypergeometric functions over finite fields, as defined in Definition \ref{def_F}, to the $p$-adic setting.
Definition \ref{def_F} can easily be extended to $\mathbb{F}_q$ where $q$ is a prime power and indeed, this is how it was originally defined in \cite[Definition 1.4]{McC6}. 
In a similar manner to the proof of Lemma \ref{lem_G_to_F}, we can then use the Gross-Koblitz formula (not as quoted in Theorem \ref{thm_GrossKoblitz} but its $\mathbb{F}_q$-version) to transform the  hypergeometric function over $\mathbb{F}_q$ to an expression involving products of the $p$-adic gamma function.
Generalizing the resulting expression  yields the following $q$-version of ${_{n}G_{n}}[\cdots]_p$.
We now let $\omega$ denote the Teichm\"{u}ller character of $\mathbb{F}_q$.
\begin{defi}\label{def_Gq}
Let $q=p^r$, for $p$ an odd prime and $r \in \mathbb{Z}^{+}$, and let $t \in \mathbb{F}_q$. 
For $n \in \mathbb{Z}^{+}$ and $1 \leq i \leq n$, let $a_i, b_i \in \mathbb{Q} \cap \mathbb{Z}_p$.
Then we define  
\begin{multline*}
{_{n}G_{n}}
\biggl[ \begin{array}{cccc} a_1, & a_2, & \dotsc, & a_n \\
 b_1, & b_2, & \dotsc, & b_n \end{array}
\Big| \; t \; \biggr]_q
: = \frac{-1}{q-1}  \sum_{j=0}^{q-2} 
(-1)^{jn}\;
\bar{\omega}^j(t)\\
\times \prod_{i=1}^{n} 
 \prod_{k=0}^{r-1} 
\frac{\biggfp{\langle (a_i -\frac{j}{q-1} )p^k \rangle}}{\biggfp{\langle a_i p^k \rangle}}
\frac{\biggfp{\langle (-b_i +\frac{j}{q-1}) p^k \rangle}}{\biggfp{\langle -b_i p^k\rangle}}
(-p)^{-\lfloor{\langle a_i p^k \rangle -\frac{j p^k}{q-1}}\rfloor -\lfloor{\langle -b_i p^k\rangle +\frac{j p^k}{q-1}}\rfloor}.
\end{multline*}
\end{defi}
When $q=p$ in Definition \ref{def_Gq} we recover ${_{n}G_{n}}[\cdots]_p$ as per Definition \ref{def_Gp}.
We believe ${_{n}G_{n}}[\cdots]_q$ could be used to generalize results involving hypergeometric functions over $\mathbb{F}_q$ which are restricted to $q$ in certain congruence classes (e.g those in \cite{L}).
However we do not examine this here for the following reason.
The main purpose of this paper is to demonstrate that  ${_{n}G_{n}}[\cdots]_p$ can be used to extend results involving hypergeometric functions over $\mathbb{F}_p$, which are limited to primes in certain congruence classes, and thus avoid the need to work over $\mathbb{F}_q$. 


\section{Acknowledgments}\label{sec_ack}
I would like to thank the referee for some helpful comments and suggestions to improve this paper.

\vspace{12pt}

\end{document}